\newif\iflongversion
  \newenvironment{longversion}{}{}
  \newenvironment{extendedabstractversion}{}{}
\newtheorem{definition}{Definition}
\newtheorem{lemma}[definition]{Lemma}
\newtheorem{proposition}[definition]{Proposition}
\newtheorem{corollary}[definition]{Corollary}
\newtheorem*{conjecture*}{Conjecture}
\newtheorem{theorem}[definition]{Theorem}
\theoremstyle{remark}
\newtheorem{example}[definition]{Example}
\newtheorem{remark}[definition]{Remark}
\newcommand{\N}{\mathbb{N}}
\newcommand{\til}{\widetilde}
\newcommand{\Lst}{\textsc{Lst}}
\newcommand{\Fst}{\textsc{Fst}}
\newcommand{\A}{\mathcal A}
\newcommand{\C}{\mathcal C}
\newcommand{\bu}{{\bf u}}
\newcommand{\bv}{{\bf v}}
\renewcommand{\L}{\mathcal L}
\newcommand{\Lu}{\L(\bu)}
\renewcommand{\P}{\mathcal P}
\crefname{theorem}{theorem}{theorems}
\crefname{corollary}{corollary}{corollaries}
\crefname{example}{example}{examples}
\crefname{lemma}{lemma}{lemmas}
\crefname{proposition}{proposition}{propositions}
\crefname{definition}{definition}{definitions}
\begin{document}

\title{On the Zero Defect Conjecture}


\author{S\'ebastien Labb\'e%
    \thanks{Electronic address: \texttt{slabbe@ulg.ac.be}}}
\affil{\footnotesize Universit\'e de Li\`ege\\
B\^at. B37 Institut de Math\'ematiques\\
Grande Traverse 12, 4000 Li\`ege, Belgium}

\author{Edita Pelantová}
\affil{\footnotesize Faculty of Nuclear Sciences and Physical Engineering\\ Czech Technical University in Prague\\ Czech Republic}

\author{Štěpán Starosta%
  \thanks{Electronic address: \texttt{stepan.starosta@fit.cvut.cz}}}
\affil{\footnotesize Faculty of Information Technology\\ Czech Technical University in Prague\\ Czech Republic}

\date{}

\maketitle

\begin{abstract}
\noindent Brlek et al. conjectured in 2008 that any fixed point of a primitive morphism with finite palindromic defect  is either periodic or
 its palindromic defect is  zero.  Bucci and Vaslet   disproved this conjecture in  2012 by a counterexample over ternary alphabet. We prove that the conjecture is valid on binary alphabet. We also  describe a class of  morphisms over   multiliteral alphabet  for which the conjecture still holds.
The proof is based on properties of extension graphs.
\end{abstract}


Keywords: palindromic defect, marked morphism, special factor, extension graph

2000 MSC: 68R15, 37B10
\section{Introduction}


Palindromes --- words read the same from the left as from the right --- are a favorite pun in various languages.
For instance, the words ressasser, ťahať, and šílíš are palindromic words in the first languages of the authors of this paper.
The reason for a study of palindromes in formal languages is not only to deepen the theory, but it has also applications.

The theoretical reasons include the fact that a Sturmian word, i.e., an infinite aperiodic word with the least factor complexity, can be characterized using the number of palindromic factors of given length that occur in a word, see \cite{DrPi}.
The application motives include the study of the spectra of discrete Schr\"{o}dinger operators, see \cite{HoKnSi,JiSi}.

In \cite{DrJuPi}, the authors provide an elementary observation that a finite word of length $n$ cannot contain more than $n+1$ (distinct) palindromic factors, including the empty word as a palindromic factor.
We illustrate this on the following 2 examples of words of length $9$:
\[
w^{(1)} = 010010100 \quad \text{ and } \quad w^{(2)}  = 011010011.
\]
The word $w^{(1)} $ is a prefix of the famous Fibonacci word and $w_2$ is a prefix of (also famous) Thue--Morse word.
There are $10$ palindromic factors of $w^{(1)}$: $0$, $1$, $00$, $010$, $101$, $1001$, $01010$, $010010$, $0010100$, and the empty word.
The word $w^{(2)} $ contains only $9$ palindromes: $0$, $1$, $11$, $0110$, $101$, $010$, $00$, $1001$, and the empty word.

The existence of the upper bound on the number of distinct palindromic factors lead to the definition of \textit{palindromic defect} (or simply \textit{defect}) of a finite word $w$, see \cite{BrHaNiRe}, as the value
\[
D(w) = n + 1 - \text{ the number of palindromic factors of } w
\]
with $n$ being the length of $w$.
Our examples satisfy $D(w^{(1)}) = 0$, i.e., the upper bound is attained, and
$D(w^{(2)}) = 1$.
The notion of palindromic defect naturally extends to infinite words.
For an infinite word $\bu$ we set
\[
D(\bu) = \sup \{ D(w) \colon w \text{ is a factor of } \bu \}.
\]

In this paper, we deal with infinite words that are generated by a primitive morphism of a free monoid $\A^*$ with $\A$ being a finite alphabet.
A morphism $\varphi$ is completely determined by the images of all letters $a \in \A$: $a \mapsto \varphi(a) \in \A^*$.
A morphism is \textit{primitive} if  there exists a power $k$ such that any letter $b \in \A$ appears in  the word $\varphi^k(a)$ for any letter  $a \in \A$.

The two mentioned infinite words can be generated using a primitive morphism.
Consider the morphism $\varphi_F$ over $\{0, 1\}^*$ determined by $0 \mapsto 01$ and $1 \mapsto 0$.
By repeated application of $\varphi_F$, starting from $0$, we obtain
\[
0 \mapsto 01 \mapsto 010 \mapsto 01001 \mapsto 01001010 \ldots
\]
Since $\varphi_F^n(0)$ is a prefix of $\varphi_F^{n+1}(0)$ for all $n \in \N$, there exists an infinite word $\bu_F$, called the Fibonacci word, such that $\varphi_F^n(0)$ is its prefix for all $n$.
Consider a natural extension of $\varphi_F$ to infinite words, we obtain that $\bu_F$ is a fixed point of $\varphi_F$ since
\[
\bu_F = \varphi_F(\bu_F) = \varphi_F(u_0u_1u_2 \ldots) = \varphi_F(u_0) \varphi_F(u_1) \varphi_F(u_2) \ldots
\]
where $u_i \in \{0,1\}$.

Similarly, let $\varphi_{TM}$ be a morphism determined by $0 \mapsto 01$ and $1 \mapsto 10$.
By repeated application of $\varphi_{TM}$, starting again from $0$, we obtain
\[
0 \mapsto 01 \mapsto 0110 \mapsto 01101001 \mapsto 0110100110010110 \ldots
\]
The infinite word having $\varphi_{TM}^n(0)$ as a prefix for each $n$ is the Thue--Morse word, sometimes also called Prouhet--Thue--Morse word.

The present article focuses on palindromic defect of infinite words which are fixed points of primitive morphisms.
In order for the palindromic defect of such an infinite word to be finite, the word must contain an infinite number of palindromic factors.
This property is satisfied by the two mentioned words $\bu_F$ and $\bu_{TM}$. However, for their palindromic defect, we have $D(\bu_F) = 0$, whilst $D(\bu_{TM}) = + \infty$.

There exist fixed points $\bu$ of primitive morphisms with $0 < D(\bu) < +\infty$,
but on a two-letter alphabet, only ultimately periodic words are known.
 In \cite{BrHaNiRe}, examples of such words are  given by Brlek, Hamel, Nivat and Reutenauer as follows:  for any  $k \in \mathbb{Z}, k\geq 2$  denote by $z$ the
finite word
\[
z = 0 1^k 0 1^{k-1} 0 0 1^{k-1} 0 1^{k} 0\,.
\]
Then the infinite periodic word $z^{\omega}$ has palindromic defect $k$.  Of course,  the periodic  word  $z^{\omega}$ is  fixed by the primitive  morphism  $0 \mapsto z, 1 \mapsto z$.
In \cite{BlBrGaLa}, the authors stated the following conjecture:
\begin{conjecture*}[Zero Defect Conjecture] \label{conj:zerodefectconjecture}
If $\bu$ is a fixed point of a primitive morphism such that $D(\bu) < +
\infty$, then $\bu$ is periodic or $D(\bu) = 0$.
\end{conjecture*}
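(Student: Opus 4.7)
The plan is to attack the conjecture through a dichotomy on bispecial factors and their extension graphs, using primitivity of $\varphi$ to propagate structural information. Recall that $D(\bu)$ is governed by how the extension graphs $E(w)$ of bispecial factors $w$ of $\bu$ deviate from being ``palindrome-compatible trees'': each such deviation adds a positive, irreversible contribution to the defect. So $D(\bu)<+\infty$ forces only finitely many bispecial factors to exhibit a ``defect-generating'' feature (a cycle in $E(w)$, or an edge without a palindromic partner), while $D(\bu)=0$ forces all of them to be clean.

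First I would pass to a sufficiently large power of $\varphi$ so that every letter is a fixed point, every letter has a long image, and synchronization arguments apply. For a bispecial factor $w$ of $\bu$ that is long enough, the word $\varphi(w)$ lies inside a unique shortest bispecial extension $w^{\sharp}$, giving a well-defined map $w\mapsto w^{\sharp}$ on the set of long bispecial factors. Using primitivity I would show that iterating this map produces infinitely many distinct bispecial factors, except when the orbit stabilizes in a way that forces $\bu$ to be periodic. The desired statement would then follow if the ``defect-generating'' feature of $E(w)$ is transmitted to $E(w^{\sharp})$: any such feature at one bispecial factor forces one at each image, so infinitely many bispecial factors carry it, contradicting $D(\bu)<+\infty$. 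Thus either $\bu$ is periodic, or every long bispecial factor has clean extension graph, which is the characterization of $D(\bu)=0$.

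The main obstacle will be the persistence step: showing that the extension graph $E(w^{\sharp})$ inherits the ``bad'' structure of $E(w)$ when $\varphi$ is arbitrary. When $\varphi$ is marked (so that the images of distinct letters start with distinct letters and end with distinct letters), the left and right extensions of $\varphi(w)$ are essentially in bijection with those of $w$, and the extension graph transports cleanly; the argument can then be completed by a desubstitution analysis, propagating obstructions backwards under $\varphi^{-1}$. Without a marking-type hypothesis, however, the extension graph of $\varphi(w)$ can collapse or rearrange relative to that of $w$: distinct extensions of $w$ may be glued by shared prefixes/suffixes of letter images, and palindromic partners may be broken by the morphism's action on the boundary.

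This is precisely where I expect my approach to fail in full generality, and indeed this collapsing phenomenon is what makes the Bucci--Vaslet counterexample possible: a non-marked primitive morphism on three letters can produce a fixed point with finite positive defect by ensuring that all but finitely many obstructions in extension graphs are erased by $\varphi$. Consequently, the honest outcome of this plan is a proof of the conjecture under a structural hypothesis on $\varphi$ (e.g.\ marked, or binary alphabet where marking is automatic up to a power), together with a precise description of how the transmission of extension-graph obstructions can fail on larger alphabets. A fully general proof would require a substitute for marking that controls the palindromic decomposition of $\varphi(w)$ at its boundary, and I do not see a candidate for such a substitute that would survive the ternary counterexample.
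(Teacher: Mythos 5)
Your assessment is correct, and your plan is essentially the paper's own: the conjecture as stated is false (the Bucci--Vaslet morphism $a \mapsto aabcacba$, $b \mapsto aa$, $c \mapsto a$), and the two restricted results you predict your method yields are exactly what the paper proves --- the binary case (\Cref{th:ZDCbinary}) and the marked case (\Cref{th:main}). Your ingredients match the paper's: positive defect produces a factor $q$ whose extension graph contains a cycle (\Cref{binarQ} on two letters, \Cref{th:tuesday} in general); finite defect forces $\Gamma(w)$ (resp.\ $\Theta(w)$ for palindromic $w$) to be a tree for all sufficiently long $w$ (\Cref{cor:wednesday}); and for a well-marked morphism the map $\Phi(u)=\varphi_R(u)w$ transports extensions, $aub\in\Lu$ implying $a\Phi(u)b\in\Lu$ (\Cref{lem:Phi5properties}), so the cycle persists in the graphs of $\Phi^n(q)$ for all $n$ --- your ``persistence step,'' which the paper runs forwards through conjugates rather than by desubstitution. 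Your diagnosis of why the argument collapses for non-marked morphisms, and that this collapse is what the ternary counterexample exploits, also agrees with the paper's commentary.

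One subtlety your sketch misses: markedness alone is not quite sufficient, because the cycle-bearing factor $q$ delivered by \Cref{th:tuesday} may be the empty word (as in the Bucci--Vaslet language itself, where $\Theta(\varepsilon)$ contains a cycle while all long factors have tree graphs), and then $\lim_{n\to+\infty}|\Phi^n(q)|=+\infty$ fails unless the conjugacy word of $\varphi_L\triangleright\varphi_R$ is nonempty. This is precisely why \Cref{th:main} carries the extra disjunctive hypothesis that either all complete return words of letters are palindromes (which rules out $q=\varepsilon$ via the ``moreover'' clause of \Cref{th:tuesday}) or $\varphi$ admits a conjugate distinct from itself. On the binary alphabet the issue does not arise, since \Cref{binarQ} always supplies a nonempty non-palindromic $q$ with $m(q)=1$, which is why \Cref{th:ZDCbinary} needs no such hypothesis; but for your ``marked, multiliteral'' conclusion the empty-word case must be excluded explicitly.
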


In 2012, Bucci and Vaslet \cite{BuVa12} found a counterexample to this conjecture on a ternary alphabet.
They showed that the fixed point of the primitive morphism determined by
\[
a \mapsto aabcacba, b \mapsto aa, c \mapsto a
\]
has finite positive palindromic defect and is not periodic.

\begin{longversion}
In this article, we show that the conjecture is valid on a binary alphabet.
Then we generalize the  method used for  morphisms on a binary alphabet to marked morphisms on a multiliteral alphabet.
The main result of the article is the following theorem.
\end{longversion}

\begin{extendedabstractversion}
In this article, we show that the conjecture is valid on a binary alphabet.
We also describe a class of morphisms over multiliteral alphabet for which
the conjecture still holds.
The main result of the article is the following theorem.
\end{extendedabstractversion}

\begin{theorem} \label{th:main}
Let $\varphi$ be a primitive marked morphism and let $\bu$ be its fixed point with finite defect.
If all complete return words of all letters in $\bu$ are palindromes or there exists a conjugate of $\varphi$ distinct from $\varphi$ itself, then $D(\bu) = 0$.
\end{theorem}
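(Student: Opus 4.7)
The plan is to argue by contrapositive: assume $D(\bu)>0$ while $D(\bu)$ remains finite, and derive that $\varphi$ must coincide with its unique conjugate and that some letter of $\bu$ has a non-palindromic complete return. The entire argument is phrased in terms of bispecial factors and their bilateral extension graphs $E(w)$, which are known to control palindromic defect: by the standard characterisation, $D(\bu)=0$ is equivalent to $E(w)$ being, for every bispecial factor $w$, a tree compatible with a palindromic involution on its vertices. Finiteness of $D(\bu)$ then restricts the \emph{defective} bispecials (those whose $E(w)$ fails this condition) to a finite set $\mathcal{D}$, which is non-empty under our assumption.

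Next I would exploit markedness to desubstitute. For a primitive marked $\varphi$, each sufficiently long factor of $\bu$ factors uniquely as $p\,\varphi(v')\,s$ with short $p,s$ determined by the marking; this induces a well-defined map $\sigma\colon w\mapsto w'$ on long bispecials that commutes, up to local relabeling by the first and last letters of the images $\varphi(a)$, with the extension-graph structure. Because $\mathcal{D}$ is finite and $\sigma$ is defect-preserving, iterating $\sigma$ from any element of $\mathcal{D}$ eventually produces a $\sigma$-periodic bispecial $w^{*}\in\mathcal{D}$ which we can then inspect directly.

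The last step is to reach a contradiction on $w^{*}$ using the disjunctive hypothesis. If every complete return of every letter in $\bu$ is a palindrome, the complete returns occurring inside $w^{*}$ are palindromes, and this symmetry propagates through the $\varphi$-preimage tower to force $E(w^{*})$ into the palindromic tree form, contradicting $w^{*}\in\mathcal{D}$. If instead $\varphi$ admits a conjugate $\psi\neq\varphi$, there is a non-empty word $z$ with $\varphi(a)z=z\psi(a)$ for every $a\in\A$; this $z$-shift furnishes each bispecial with a second desubstitution, and combining the two endows $E(w^{*})$ with a reflection symmetry that again makes it a palindromic tree, a contradiction.

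The main obstacle is the desubstitution step: one must verify that the marked desubstitution is well-defined and essentially bijective on long bispecials, that it preserves defectiveness (up to the controlled relabeling induced by the initial and terminal letters of the images $\varphi(a)$), and, in the conjugate case, that the $z$-shift acts on extension graphs compatibly with the palindromic involution. Handling the interaction between this letter-relabeling and the involution is the most delicate technical point, and arguably the reason why exactly these two alternative hypotheses appear in the statement: each supplies precisely the additional symmetry needed to close the loop from markedness back to a palindromic extension graph.
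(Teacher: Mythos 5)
There is a genuine gap, in fact two. First, your opening reduction rests on a ``standard characterisation'' that is not standard: the implication you need, namely that $D(\bu)>0$ forces the existence of a factor $q$ whose extension graph contains a cycle (graph $\Gamma(q)$ if $q\neq\til{q}$, graph $\Theta(q)$ on one-sided extensions if $q=\til{q}$), is precisely the content of \Cref{th:tuesday} of the paper, proved by a delicate analysis of a shortest non-palindromic complete mirror return word, with a separate ``moreover'' clause for the case where the only such $q$ is the empty word. You assume this wholesale. Your formulation is also imprecise: for palindromic bispecials the relevant object is $\Theta(w)$ (tree iff $m(w)=\#E^=(w)-1$), and zero-defect words may well contain strong palindromic bispecials, so ``$E(w)$ is a tree compatible with a palindromic involution'' is not the right criterion. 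Second, and more fatally, your desubstitution scheme points in the wrong direction. By finiteness of the defect (\Cref{cor:wednesday}, via \Cref{th:finite_defect_bs_multiplicities}), all cycle-containing factors are \emph{short}; your map $\sigma$ is only well defined on \emph{long} factors (synchronization), so it cannot be iterated inside the finite set $\mathcal{D}$, and there is no reason a ``$\sigma$-periodic defective bispecial $w^*$'' exists. The workable argument goes the other way: replace $\varphi$ by a power so that it is well-marked with $\Fst(\varphi_L)=\Lst(\varphi_R)=\Id$, and use the map $\Phi(u)=\varphi_R(u)w$ ($w$ the conjugacy word of $\varphi_L\triangleright\varphi_R$, a palindrome by palindromicity). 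Property (IV) of \Cref{lem:Phi5properties} gives $aub\in\Lu\Rightarrow a\Phi(u)b\in\Lu$, so the cycle survives in the graph of $\Phi^n(q)$ for every $n$, while $|\Phi^n(q)|\to\infty$; this contradicts the tree property of long factors.

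Your use of the disjunctive hypothesis is also not what is needed. Neither ``all complete returns of letters are palindromes'' nor the existence of a distinct conjugate plausibly endows a fixed defective bispecial $w^*$ with enough symmetry to force its graph to be a tree; you give no mechanism for the claimed ``propagation through the preimage tower'' or the ``reflection symmetry from the $z$-shift,'' and I do not see one. In the paper the disjunction enters only at the single degenerate point of the blow-up argument: if the unique cycle-carrying factor is $q=\varepsilon$, then by the moreover part of \Cref{th:tuesday} some letter has a non-palindromic complete return, so the first alternative fails; the second alternative then forces the conjugacy word of $\varphi_L\triangleright\varphi_R$ to be nonempty, hence $\Phi(\varepsilon)\neq\varepsilon$ and $|\Phi^n(\varepsilon)|\to\infty$, and the same contradiction applies. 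Without this localized use of the hypothesis, and without the upward map $\Phi$, your outline does not close.
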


Observe that in the case of primitive marked morphisms, as it was noted in
\cite[Cor. 30, Cor.  32]{LaPe14}, there is no ultimately periodic infinite
word $\bu$ fixed point of a primitive marked morphisms such that
$0<D(\bu)<\infty$.




\begin{extendedabstractversion}
The main ingredients for the presented proof of \Cref{th:main} are the following:
\begin{enumerate}
\item description of bilateral multiplicities of factors in a word with finite palindromic defect (\cite{BaPeSta2}),
\item description of the form of marked morphisms with their fixed points containing infinitely many palindromic factors (\cite{LaPe14}).
\item  observation that non-zero palindromic defect  of a word  implies an
    existence of a factor with a specific property, see \Cref{th:tuesday} for
    the multiliteral case.
\end{enumerate}
\end{extendedabstractversion}

\begin{longversion}
The main ingredients for the presented proofs of \Cref{th:main} and \Cref{th:ZDCbinary} are the following:
\begin{enumerate}
\item description of bilateral multiplicities of factors in a word with finite palindromic defect (\cite{BaPeSta2}),
\item description of the form of marked morphisms with their fixed points containing infinitely many palindromic factors (\cite{LaPe14}).
\item  observation that non-zero palindromic defect  of a word  implies an  existence of a factor with a specific property, see \Cref{binarQ}  for the binary case and \Cref{th:tuesday} for the multiliteral case.
\end{enumerate}
\end{longversion}

\begin{longversion}
The paper is organized as follows:
First we recall notions from combinatorics on words and we list known results that we use in the sequel.
In Section 3, the properties of marked morphisms are studied.
In Section 4, we introduce the notion of a graph of a factor and we interpret bilateral multiplicity of factors in the language of graph theory.
Section 5 is focused on properties of a graph of a factor in the case of language having finite palindromic defect.
The validity of the Zero Defect Conjecture on binary alphabet is demonstrated in \Cref{sec:binary} (\Cref{th:ZDCbinary}).
\Cref{sec:multimarked} contains the proof of \Cref{th:main}.
Comments on counterexamples to two conjectures concerning palindromes form the last Section 8.
\end{longversion}

\section{Preliminaries}

An {\it alphabet} $\A$ is a finite set of symbols called {\it letters}.
A {\it finite word} $w = w_0w_1 \cdots w_{n-1}$ is a finite sequence over $\A$, i.e., $w_i \in \A$.
The {\it length} of $w$ is $n$ and is denoted by $|w|$.
An {\it infinite word} is an infinite sequence over $\A$.
Given words $p,f,s$ with $p$ and $f$ being finite such that $w = pfs$, we say that $p$ is a {\it prefix} of $w$, $f$ is a {\it factor} of $w$, and $s$ is a {\it suffix} of $w$.

\subsection{Language of an infinite word}

 Consider an infinite word $\bu = (u_n)_{n\in \mathbb{N}}$ over the alphabet $\A$.
An index $i\in \mathbb{N}$ is an occurrence of a factor $w = w_0w_1 \cdots w_{n-1}$ of  $\bu$  if
$u_iu_{i+1}\cdots u_{i+n-1} = w$, in other words $w$ is prefix of the infinite word $u_iu_{i+1}u_{i+2}\cdots $.
The set of all factors of $\bu$ is referred to as the {\it language} of $\bu$ and denoted $\Lu$.
The mapping $\C(n)$, defined by $\C(n) = \# \Lu \cap \A^n$, is the {\it factor complexity} of $\bu$.
A word $\bu$ is called recurrent if any factor $w\in \Lu$ has infinitely many occurrences.
If $ i <j$  are two consecutive occurrences  of the factors $w$, then  the factor $u_iu_{i+i} \cdots u_{j}u_{j+1}\cdots u_{j+n-1}$ is
the {\it complete return word} to $w$ in $\bu$.  If any factor of a recurrent word $\bu$  has only finitely many complete return words, then $\bu$ is called {\it uniformly recurrent}.

 {\it Reversal} of  a finite word $w = w_0w_1 \cdots,w_{n-1}$  is the word $\widetilde{w} =  w_{n-1}w_{n-2} \cdots w_{0}$.
A word $w$ is a {\it palindrome} if $w =\widetilde{w} $.
The language of $\bu$ is said to be {\it closed  under  reversal}  if $w \in \Lu$ implies $\widetilde{w} \in \Lu$; $\bu$ is said to be {\it palindromic}  if $\Lu$ contains infinitely many palindromes.
If a uniformly recurrent word $\bu$  is palindromic, then its language is closed under reversal.
The mapping counting the palindromes of length $n$ in $\Lu$ is the {\it palindromic complexity} and is denoted by $\P(n)$, i.e., we have $\P(n) = \# \{ w \in \Lu \colon |w| = n \text{ and } w = \til{w} \}$.

A letter $b\in \A $ is called {\it right } (resp. {\it left}) {\it extension} of $w$ in $\Lu$ if $wb$ (resp. $bw$) belongs to $\Lu$.
In a recurrent word $\bu$ any factor has at least one right and at least one left extension.
A factor  $w$ is {\it right special}  (resp. {\it left special}) if it has    more than one right (resp. left) extension.
A factor $w$ which is simultaneously left and right special is {\it bispecial}.
To describe one-sided and   both-sided  extensions  of a factor $w$ we put
\[  \  E^+(w)    = \{b \in \A \colon  wb \in \Lu \}, \quad   E^-(w)    = \{a \in \A \colon  aw \in \Lu \},
\]
\[
 \text{and} \quad E(w)   = \{(a,b) \in \A^2  \colon awb \in \Lu \}.
\]
The \emph{bilateral multiplicity} $m(w)$ of a factor $w\in\Lu$ is defined as
\[
    m(w) = \#E(w) - \#E^+(w) - \#E^-(w) + 1.
\]
Under the assumption of recurrent language, the second difference of the factor complexity may be expressed using bilateral multiplicities as follows:
\begin{equation} \label{eq:FaCoBiMu}
\Delta^2 \C(n) = \C(n+2) - 2 \C(n+1) + \C(n)
    = \sum_{ \substack{w \in \Lu \\ |w| =  n \\ w \text{ is bispecial}}} m(w)
    = \sum_{ \substack{w \in \Lu \\ |w| =  n}} m(w).
\end{equation}
(See \cite{MR2759107}, Section 4.5.2 for the equation \eqref{eq:FaCoBiMu} and Section 4 for a recent reference on factor complexity in general.)
Note that the last equality in \eqref{eq:FaCoBiMu} follows from the fact that $m(w)$ is nonzero only for bispecial factors in the case of a recurrent language.

A bispecial factor $w\in\Lu$ is said to be
\emph{strong} if $m(w)>0$,
\emph{weak} if $m(w)<0$
and
\emph{neutral} if $m(w)=0$.

\subsection{Palindromic defect}\label{DefDefect}

As shown in \cite{DrJuPi} finite words  with zero defect can be characterized  using palindromic suffixes.
More specifically, a word
$w=w_0w_1\cdots w_{n-1}$ has defect $0$ if and only if for any $i =0,1,\ldots, n-1$ the longest palindromic suffix of   $w_0w_1\cdots w_{i}$
is unioccurrent in $w$.
To illustrate this important property,  consider the words
\[
w^{(1)}  = 010010100 \quad \text{ and } \quad w^{(2)}  = 011010011.
\]
mentioned in Introduction.
The longest palindromic suffix of $w^{(1)} $ is  $0010100$  and it is unioccurrent in $w^{(1)} $, whereas
 the longest palindromic suffix of $w^{(2)} $ is $11$ and occurs in $w^{(2)} $ twice.
The index $i$  for which the longest palindromic suffix is not unioccurrent  is called a {\it lacuna} and the number of lacunas equals the palindromic defect of $w$.

Since the number of palindromes in $w$ and in its reversal $\widetilde{w}$ is the same,  we have $D(w) = D(\widetilde{w})$.
Therefore, instead of the longest palindromic suffix one can consider the longest palindromic prefix as well.

The complete return words  were applied in \cite{GlJuWiZa} to characterize infinite words with zero defect.

\begin{theorem}[\cite{GlJuWiZa}] \label{th:crws_to_pals}
$D(\bu) = 0$ if and only if for all palindromes $w \in \Lu$ all complete return words to $w$ in $\bu$ are palindromes.
\end{theorem}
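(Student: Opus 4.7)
My plan is to prove the two implications separately, each leaning on the characterization of palindromic defect zero via unioccurrent longest palindromic suffixes stated just above (from \cite{DrJuPi}).

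For the forward implication, I fix a palindrome $p\in\Lu$ and a complete return word $r$ to $p$, so that $p$ occurs in $r$ exactly at positions $0$ and $|r|-|p|$. Since $r$ is a factor of $\bu$ and $D(\bu)=0$, also $D(r)=0$, and applying the characterization to $r$ (taking the full word as its own prefix) yields a longest palindromic suffix $S$ of $r$ that is unioccurrent in $r$. The inequality $|S|\ge|p|$ is forced by $p$ being a palindromic suffix of $r$, and the case $|S|=|p|$ is immediately excluded: it would give $S=p$, yet $p$ occurs twice in $r$. Hence $|S|>|p|$, so $S$ ends with $p$ and, being a palindrome, also starts with $\widetilde p=p$; this produces an occurrence of $p$ at position $|r|-|S|<|r|-|p|$ in $r$, which by the return-word rigidity must be position $0$. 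Thus $S=r$ and $r$ is a palindrome.

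For the reverse implication I argue by contradiction. Assume every complete return word to every palindrome in $\Lu$ is a palindrome, yet $D(\bu)>0$. Then some factor $s$ of $\bu$ has its longest palindromic suffix $P$ occurring more than once in $s$. Let $j<|s|-|P|$ be the largest position other than $|s|-|P|$ at which $P$ occurs in $s$; by maximality of $j$, no occurrence of $P$ in $s$ lies strictly between positions $j$ and $|s|-|P|$. Viewing $s$ as a window inside $\bu$, every position strictly between these two also lies inside $s$, so the two occurrences are consecutive in $\bu$ as well. Therefore $r:=s[j\,..\,|s|-1]$ is a complete return word of the palindrome $P$ in $\bu$; by hypothesis $r$ is a palindrome, and it is a palindromic suffix of $s$ strictly longer than $P$, contradicting the choice of $P$ as the longest palindromic suffix of $s$.

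The only delicate step I anticipate is the passage from ``consecutive occurrences inside the finite factor $s$'' to ``consecutive occurrences inside the infinite word $\bu$'' in the reverse direction; once the inclusion of the intermediate positions inside $s$ is pointed out, it becomes routine. Everything else is length bookkeeping ($|S|\ge|p|$ in one direction, $|r|>|P|$ in the other) combined with the characterization of defect zero.
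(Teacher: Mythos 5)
Your proof is correct; the paper does not prove \Cref{th:crws_to_pals} itself but quotes it from \cite{GlJuWiZa}, and your argument is essentially the standard one behind that result: both directions reduce to the characterization of defect zero via unioccurrent longest palindromic suffixes recalled in Section~\ref{DefDefect}, with the forward direction forcing the longest palindromic suffix of a complete return word to be the whole word, and the reverse direction producing a longer palindromic suffix from a palindromic complete return word. The one delicate step you flag --- that two occurrences of $P$ which are consecutive inside the factor $s$ are also consecutive in $\bu$, so that $s[j\,..\,|s|-1]$ really is a complete return word to $P$ --- is handled correctly by your observation that any intermediate occurrence would lie entirely inside the window of $s$.
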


Before stating a generalization of the previous result we need a new notion.
\begin{definition}
Let $\bu \in \A^\N$ 
and $w \in \Lu$.
A word $c = c_1 c_2 \cdots c_n \in \Lu$ is a \emph{complete mirror return} to $w$ in $\bu$ if
\begin{enumerate}
\item neither $w$ nor $\til{w}$ is a factor of $c_2 \cdots c_{n-1}$, and
\item either $w$ is a prefix of $c$ and $\til{w}$ is suffix of $c$, or $\til{w}$ is a prefix of $c$ and $w$ is a suffix of $c$.
\end{enumerate}
\end{definition}

Note that $c$ is a complete mirror return to $w$ if and only if it is a complete mirror return to  $\til{w}$.

\begin{example} We illustrate  the notion of complete mirror return word on the Fibonacci word  $\bu_F$.  The factors $r_1$, $r_2$  and $r_3$  are  complete mirror returns to $w_1 = 0101$,   $w_2 = 001$ and  $w_3 = 00$ respectively.

$$
\bu_{F} = 010\underbrace{01010}_{r_1}0100101\underbrace{0010100}_{r_2}1\underbrace{0010100}_{r_3}10 \cdots
$$
Note that if $w = \til{w}$, then the complete mirror return words of $w$ and $\til{w}$ coincide with complete return words of $w$.
\end{example}
Using the notion of complete mirror return word  we can reformulate Proposition 2.3 from \cite{BuLuGlZa}.

\begin{proposition}[\cite{BuLuGlZa}]\label{prop:rich}
  Let $\bu \in \A^\N$. We have $D(\bu) = 0$ if and only if for each factor $w \in \Lu$  any complete mirror return word to  $w$  in $\bu$ is a palindrome.
\end{proposition}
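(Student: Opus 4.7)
The backward direction is a direct consequence of \Cref{th:crws_to_pals}: when $w \in \Lu$ is itself a palindrome, complete mirror returns to $w$ coincide with ordinary complete returns to $w$. So the hypothesis specializes to ``every complete return to every palindromic factor is a palindrome'', and \Cref{th:crws_to_pals} yields $D(\bu) = 0$.

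For the forward direction, I would assume $D(\bu) = 0$, fix $w \in \Lu$, and let $c = c_1 \cdots c_n$ be a complete mirror return to $w$. If $w = \til w$ then $c$ is an ordinary complete return, and \Cref{th:crws_to_pals} applies. So assume $w \neq \til w$ and, without loss of generality, that $w$ is a prefix and $\til w$ a suffix of $c$. A preparatory observation I would make first is that under $w \neq \til w$, the definition of complete mirror return forces $w$ to occur in $c$ only at position $0$ and $\til w$ to occur only at position $n - |w|$: neither is allowed to be a factor of $c_2 \cdots c_{n-1}$, so any occurrence must touch $c_1$ or $c_n$, and a suffix occurrence of $w$ or a prefix occurrence of $\til w$ would force $w = \til w$.

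The plan is then to let $p$ be the longest palindromic suffix of $c$ and prove $p = c$, so that $c$ is a palindrome. The hypothesis $D(\bu) = 0$ implies, via the standard characterization recalled in \Cref{DefDefect} (applied to $c$ as a prefix of itself), that $p$ is unioccurrent in $c$. I split into two cases by $|p|$. If $|p| \leq |w|$, then $p$ is a palindromic suffix of $\til w$; reversing shows that $p$ is a prefix of $w$, hence of $c$, so $p$ occurs at position $0$ and at position $n - |p|$, and unioccurrence forces $p = c$. If $|p| > |w|$, then $\til w$ is a suffix of $p$ and, by palindromicity, $w$ is a prefix of $p$; since $p$ is a suffix of $c$ this produces an occurrence of $w$ at position $n - |p|$ in $c$, strictly smaller than $n - |w|$. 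By the preparatory observation the only admissible positions for $w$ are $0$ and $n - |w|$, so $n - |p| = 0$ and again $p = c$.

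The main obstacle is Case~2: a palindromic suffix of $c$ longer than $|w|$ could a priori exist and spoil the argument. The point where the proof turns is the realization that such a $p$ automatically manufactures an occurrence of $w$ inside $c$ which is neither a prefix nor a suffix, contradicting the definition of a complete mirror return. Once this is recognized, the unioccurrence property supplied by $D(\bu)=0$ finishes both cases uniformly.
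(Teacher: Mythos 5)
Your proof is correct. Note that the paper does not prove this proposition at all: it is stated as a reformulation of Proposition~2.3 of \cite{BuLuGlZa} and simply cited, so there is no internal argument to compare with; what you supply is a self-contained derivation. Your route is sound: the backward direction correctly reduces to \Cref{th:crws_to_pals} via the observation (also made in the paper after the example) that complete mirror returns to a palindrome coincide with complete returns; the forward direction correctly combines the unioccurrence of the longest palindromic suffix (the Droubay--Justin--Pirillo characterization recalled in \Cref{DefDefect}, applicable since $D(c)\le D(\bu)=0$ for every factor $c$) with your preparatory observation that for $w\neq\til{w}$ the occurrences of $w$ and $\til{w}$ in a complete mirror return are pinned to the prefix and suffix positions. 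Two small remarks, neither affecting validity: in your Case~1 ($|p|\le|w|$) the conclusion $p=c$ in fact forces $c=w$ and hence $w=\til{w}$, so under the standing assumption $w\neq\til{w}$ this case is vacuous rather than merely ``uniform''; and in the palindromic case it is worth saying explicitly that a complete mirror return $c$ to $w=\til{w}$ with $|c|>|w|$ yields two \emph{consecutive} occurrences of $w$ in $\bu$ (any intermediate occurrence would sit inside $c_2\cdots c_{n-1}$), which is exactly why $c$ is a complete return word in the sense needed to invoke \Cref{th:crws_to_pals}.
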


A generalization  of the previous statement to finite defect follows from \cite[Cor.~5 and Lemma~14]{BaPeSta5}.

\begin{theorem}[\cite{BaPeSta5}] \label{th:mrs_are_palindromes}
Let $\bu\in\A^\N$ be aperiodic and have its language closed under reversal.
    $D(\bu) < + \infty$ if and only if there exists a positive integer $K$ such that for every factor $w$ of length at least $K$ the occurrences of $w$ and $\til{w}$ alternate and every complete mirror return to $w$  in $\bu$ is a palindrome.
\end{theorem}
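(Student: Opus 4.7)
The plan is to prove the two directions separately, with both relying on the interpretation (recalled in Subsection~\ref{DefDefect}) of $D(\bu)$ as the number of lacunas in $\bu$, and with \Cref{prop:rich} serving as the defect-zero base case.

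For the direction $(\Leftarrow)$, suppose the alternation and mirror-return-palindromicity property holds for every factor of length at least $K$. If these properties held for \emph{all} factors, \Cref{prop:rich} would give $D(\bu)=0$; the idea is to show that only a bounded deficit can come from short factors. Concretely, I would argue that whenever an index $i$ is a lacuna of $\bu$, the longest palindromic suffix of $u_0\cdots u_i$ has length bounded by some function of $K$: otherwise, it contains a factor of length $\geq K$ whose alternation with its reversal and palindromic complete mirror return imply, by a direct combinatorial check, that this longest palindromic suffix is unioccurrent --- contradicting the definition of lacuna. Since prefixes admitting palindromic suffixes of bounded length are controlled by finitely many configurations, the number of lacunas is finite and $D(\bu)<\infty$.

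For the direction $(\Rightarrow)$, suppose $D(\bu)<\infty$. Then the set of lacuna indices is finite; fix $M$ larger than all of them. Choose $K$ so large that any complete mirror return in $\bu$ to a factor of length $\geq K$ lies past position $M$. Now, if for some $w$ with $|w|\geq K$ the occurrences of $w$ and $\til w$ failed to alternate in $\bu$, one could find two consecutive occurrences of $w$ (say) with no $\til w$ between them; this produces a prefix, past position $M$, whose longest palindromic suffix coincides with an earlier one, contradicting the absence of lacunas beyond $M$. A parallel argument shows that a non-palindromic complete mirror return to a long factor also forces a fresh lacuna past $M$.

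The main obstacle, I expect, is the forward direction, and specifically the step of translating \emph{failure of alternation} or \emph{existence of a non-palindromic complete mirror return} at a long factor $w$ into the existence of a new lacuna. The bridge is a careful tracking of how the longest palindromic suffix evolves as one scans through $\bu$ letter by letter at each occurrence of $w$ or $\til w$: alternation together with palindromic mirror returns ensures that each fresh occurrence introduces a new palindromic suffix, whereas a violation of either property forces the longest palindromic suffix at some occurrence to coincide with one that has already appeared. Making this local-to-global correspondence precise --- essentially the content of \cite[Cor.~5 and Lemma~14]{BaPeSta5} --- requires care in handling the boundary case where $w=\til w$ (so that complete mirror returns collapse to ordinary complete return words and \Cref{th:crws_to_pals} applies), and in verifying that the threshold $K$ can be chosen uniformly in $w$.
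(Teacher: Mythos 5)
The paper does not actually prove this statement: it is imported verbatim from \cite[Cor.~5 and Lemma~14]{BaPeSta5}, so your proposal must stand on its own, and as written it has a genuine gap exactly where you locate ``the main obstacle''. In the forward direction you assert that a failure of alternation, or a non-palindromic complete mirror return, at a long factor $w$ ``forces a fresh lacuna past $M$'', but this is never argued, and the natural attempt fails. Take the prefix of $\bu$ ending at the last letter of the offending complete (mirror) return word $r$ and let $p$ be its longest palindromic suffix. The defect-zero argument behind \Cref{th:crws_to_pals} and \Cref{prop:rich} derives a contradiction by reflecting occurrences of $w$ inside $p$ to produce a forbidden interior occurrence of $w$ or $\til{w}$ in $r$; this works because with defect zero one may apply unioccurrence to the longest palindromic suffix of the \emph{factor} $r$ itself (every factor is rich), so that $|p|\leq |r|$ may be assumed. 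With merely finite defect that localization is unavailable: the only information is about prefixes of $\bu$ beyond the last lacuna, and there $p$ may be much longer than $r$, reaching back before position $M$; its unioccurrence in the whole prefix then contradicts nothing, and no lacuna need appear at that position. Your final paragraph concedes that making this ``local-to-global correspondence'' precise is ``essentially the content of \cite{BaPeSta5}'' --- that is, the key step of the forward direction is being assumed rather than proved. A smaller but real slip in the same direction: one cannot ``choose $K$ so large that any complete mirror return to a factor of length $\geq K$ lies past position $M$'' (a factor of length $K$ may occur at position $0$); what is true is only that such a return word, having length greater than $K$, \emph{ends} beyond position $M$ once $K>M$.

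The backward direction is sound in outline --- your unioccurrence argument correctly shows that any lacuna has its longest palindromic suffix shorter than $K$ --- but the concluding sentence ``prefixes admitting palindromic suffixes of bounded length are controlled by finitely many configurations'' is not a proof of finiteness: a word can perfectly well have infinitely many prefixes with short longest palindromic suffix. What closes the argument is an extra observation you do not make: for any position $i$ at which the length-$K$ suffix $w$ of $u_0\cdots u_i$ (or $\til{w}$) has already occurred earlier, the complete mirror return to $w$ ending at $i$ is, by hypothesis, a palindromic suffix of $u_0\cdots u_i$ of length greater than $K$, so the longest palindromic suffix at $i$ is long and (again by your unioccurrence argument applied to this long palindrome) unioccurrent; hence every lacuna sits at one of the finitely many ``first appearance'' positions of factors of length $K$, giving $D(\bu)<+\infty$. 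With that addition the $(\Leftarrow)$ half is complete; the $(\Rightarrow)$ half, however, still needs the substantive machinery of \cite{BaPeSta5} (or an argument of comparable strength) that your sketch only gestures at.
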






\subsection{Morphisms}

In this section we concentrate  on  primitive morphisms.
For a morphism $\varphi: \A^* \to \A^*$ consider the maps $\Fst(\varphi), \Lst(\varphi) : \A \to \A$  defined  by the formula
\[
\Fst(\varphi)(a) = \text{the first letter of } \varphi(a) \qquad \text{ and } \qquad  \Lst(\varphi)(a) = \text{the last letter of } \varphi(a)
\]
for all $a \in \A$.
A morphism  $\varphi$ may have more fixed points, see for example the Thue--Morse morphism.
The number  of fixed points of a primitive morphism $\varphi$ is the number of letters for which  $ \Fst(\varphi)(a) =a$.
It is easy to see  that the languages of all fixed points of a primitive morphism coincide and therefore all its fixed points have the same defect.

Recall from Lothaire \cite{Lo2} (Section 2.3.4) that a morphism $\psi$ is a \emph{left
conjugate}  of   $\varphi$,
or that $\varphi$ is a \emph{right
conjugate} of $\psi$, denoted $\psi\triangleright\varphi$, if there
exists $w \in \A^*$ such that
\begin{equation}
\varphi(x)w = w\psi(x), \quad \textrm{for all words } x \in \A^*, \label{FirstCond}
\end{equation}
or equivalently that
$\varphi(a)w = w\psi(a)$, for all letters $a \in \A$.
We say that the word $w$ is the \emph{conjugate word of the relation
$\psi\triangleright\varphi$}.
If,  moreover,  the map $\Fst(\psi)$ is not constant, then $\psi$ is the \emph{leftmost conjugate} of $\varphi$.
Analogously, if $\Lst(\varphi)$ is not constant, then $\varphi$ is the \emph{rightmost conjugate} of $\psi$.

\begin{example}
Let
\[
\varphi: \begin{array}{ll}a &\mapsto abab \\ b &\mapsto abb \end{array} \quad \text{ and } \quad \psi: \begin{array}{ll}a &\mapsto baba \\ b &\mapsto bba \end{array}.
\]
We have $\psi\triangleright\varphi$ and the conjugate word of the relation is $w = a$.
The leftmost conjugate of $\varphi$ (and of $\psi$) is the morphism
\[
a \mapsto abab \quad \text{ and } \quad b \mapsto bab.
\]
\end{example}

If $\varphi$ is a primitive morphism, then any of its (left or right) conjugate  is primitive as well and the languages of their fixed points coincide.

A morphism $\varphi:\A^*\to\A^*$ is \emph{cyclic} \cite{Lo83}
if there exists a word $w\in\A^*$ such that
$\varphi(a)\in w^*$ for all $a\in\A$.
Otherwise, we say that $\varphi$ is \emph{acyclic}.
If $\varphi$ is cyclic, then the fixed point of $\varphi$ is $wwww\ldots$ and
is periodic.
Remark that the converse does not hold.
For example, the morphism determined by $a\mapsto aba$ and $b\mapsto bab$ is
acyclic but its fixed point is periodic.
Obviously,
a morphism is cyclic if and only if it is conjugate to itself with a nonempty
conjugate word.
If a morphism is acyclic, it has a leftmost and a rightmost conjugate.
See \cite{LaPe14} for a proof of these statements on cyclic morphisms.

\section{Marked morphisms}
A morphism $\varphi$  over binary alphabet has a trivial but important property:   the leftmost conjugate of $\varphi$ maps both letters to words with a distinct starting letter and analogously for the rightmost conjugate.
The notion of {\it marked morphism}  extends  this important property  to any alphabet.

\begin{definition}
\label{def:marked}
Let $\varphi$ be an acyclic morphism.
We say that $\varphi$ is \emph{marked} if
\begin{center}
$\Fst(\varphi_L)$ and $\Lst(\varphi_R)$ are injective
\end{center}
and that $\varphi$ is \emph{well-marked} if
\begin{center}
it is marked and if $\Fst(\varphi_L)=\Lst(\varphi_R)$
\end{center}
where $\varphi_L$ (resp. $\varphi_R$) is the leftmost (resp. rightmost)
conjugate of~$\varphi$.
\end{definition}

\begin{remark} Any injective mapping  $f$ on a finite set  is a permutation
    and thus there exists a power $k$ such that $f^k$ is the identity map.  It
    implies that for any marked morphism $\varphi$ there exists a power $k$
    such that $\varphi^k$  is well-marked and moreover
    $\Fst(\varphi^k_L)=\Lst(\varphi^k_R) = {\rm Id}$.
\end{remark}

\begin{longversion}


\begin{theorem}[\cite{LaPe14}] \label{th:conjugate_palindromes}
Let $\varphi$ be a primitive well-marked morphism and $\bu$ be its palindromic fixed point.
The conjugacy word $w$ of $\varphi_L\triangleright \varphi_R$ is a palindrome and $$\til{
\varphi_R(a)} = \varphi_L(a) \ \text{ for all }\ a \in \A\,.$$
\end{theorem}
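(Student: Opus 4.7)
The plan is to prove the letter-wise identity $\til{\varphi_R(a)} = \varphi_L(a)$ first, and then deduce that $w$ is a palindrome. The two key inputs are: (i) palindromicity together with uniform recurrence of $\bu$ (as a fixed point of a primitive morphism), so that $\Lu$ is closed under reversal and contains arbitrarily long palindromes; and (ii) markedness of $\varphi$, which ensures that every sufficiently long factor of $\bu$ admits a unique decomposition as a concatenation of $\varphi_R$-images of letters (with possibly incomplete blocks at the two ends), because $\Fst(\varphi_L)$ and $\Lst(\varphi_R)$ are injective and so identify block starts and block ends.

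First I would pick a palindrome $p = p_0 p_1 \cdots p_n \in \Lu$ long enough that the subfactor obtained by trimming the two extreme $\varphi_L$-blocks has a well-defined unique $\varphi_R$-decomposition, and, by primitivity, such that every letter of $\A$ appears as some interior $p_i$. Since $\varphi_L$ is a conjugate of $\varphi$, the image $\varphi_L(p)$ is a factor of $\bu$, and by reversal closure so is
\[
\til{\varphi_L(p)} = \til{\varphi_L(p_0)}\, \til{\varphi_L(p_1)} \cdots \til{\varphi_L(p_n)},
\]
using $p_i = p_{n-i}$. This same factor, by markedness, admits a unique $\varphi_R$-decomposition. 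The well-marked hypothesis $\Fst(\varphi_L) = \Lst(\varphi_R)$ matches the boundary letters of the two decompositions: the terminal letter of each $\til{\varphi_L(p_i)}$ block is the image of $p_i$ under this common permutation, which is precisely where a $\varphi_R$-block must terminate. Hence the two decompositions coincide block by block. Reading off each block gives $\til{\varphi_L(p_i)} = \varphi_R(p_i)$ for all interior $i$; since every letter of $\A$ occurs as some such $p_i$, we obtain $\til{\varphi_L(a)} = \varphi_R(a)$ for every $a \in \A$, equivalently $\varphi_L(a) = \til{\varphi_R(a)}$.

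For the palindromicity of $w$, substitute $\varphi_L(a) = \til{\varphi_R(a)}$ into $\varphi_R(a) w = w \varphi_L(a)$ to obtain $\varphi_R(a) w = w \til{\varphi_R(a)}$ for every $a$. Reversing each side yields $\varphi_R(a) \til w = \til w\, \til{\varphi_R(a)}$, so $\til w$ is another conjugate word between $\varphi_R$ and the morphism $a \mapsto \til{\varphi_R(a)}$. Conjugate words between a pair of acyclic morphisms are unique: if $w'=wx$ were a second conjugator, the two relations combine to $\til{\varphi_R(a)}\, x = x\, \til{\varphi_R(a)}$ for every $a$, which forces each $\til{\varphi_R(a)}$ to be a power of a common word shared with $x$, making $\varphi_R$ (and hence $\varphi$) cyclic and contradicting markedness. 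Therefore $w = \til w$.

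The main obstacle is the alignment step: the unique $\varphi_R$-desubstitution of $\til{\varphi_L(p)}$ is a priori offset from the natural block decomposition inherited from the reversal, and the well-marked hypothesis must be leveraged to pin the offset down to zero. Justifying this rigorously requires careful bookkeeping with the permutations $\Fst(\varphi_L)$ and $\Lst(\varphi_R)$ together with the local recognizability window of the marked morphism $\varphi_R$ around block boundaries.
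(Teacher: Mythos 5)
The paper itself gives no proof of this statement --- it is imported verbatim from \cite{LaPe14} --- so your argument has to stand on its own, and its central step does not. The decisive claim is that the factor $\til{\varphi_L(p)}$ admits a unique $\varphi_R$-decomposition and that this decomposition is aligned, block by block, with the factorization $\til{\varphi_L(p_0)}\,\til{\varphi_L(p_1)}\cdots\til{\varphi_L(p_n)}$; this is exactly the content of the letterwise identity $\til{\varphi_R(a)}=\varphi_L(a)$, and it is the step you leave open (your last paragraph concedes that the offset ``must be leveraged to pin down to zero''). Two concrete problems. First, markedness alone does not give unique desubstitution of long factors: the morphism $0\mapsto 010$, $1\mapsto 101$ is primitive, acyclic, well-marked with $\Fst(\varphi_L)=\Lst(\varphi_R)=\Id$, and its fixed point $(01)^\omega$ is palindromic, yet every long factor occurs at positions in all residue classes modulo $3$ and hence admits several $\varphi_R$-parses; so the tool you rely on is false in the periodic case, which the theorem must cover (cf.\ the paper's \Cref{cor:onlyBinaryAlphabet}). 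In the aperiodic case unique desubstitution is recognizability in the sense of Moss\'e, a nontrivial theorem that you neither state precisely nor prove, and injectivity of $\Fst(\varphi_L)$ and $\Lst(\varphi_R)$ by itself does not ``identify block starts and block ends''. Second, even granting a unique parse, the well-marked matching of boundary letters does not force alignment: the letter $\Lst(\varphi_R)(p_i)=\Fst(\varphi_L)(p_i)$ may occur many times inside images, so the $\varphi_R$-cutting points of the occurrence of $\til{\varphi_L(p)}$ in $\bu$ could a priori sit at a nonzero offset from the reversed-$\varphi_L$ block boundaries, and nothing in your sketch excludes this. So the heart of the theorem is missing, not merely a routine verification.

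By contrast, the second half of your argument is essentially sound once the letterwise identity is available, though it can be tightened: extending the conjugacy relation to words gives $\varphi_R(x)w=w\,\til{\varphi_R}(x)$ and $\varphi_R(x)\til{w}=\til{w}\,\til{\varphi_R}(x)$ for all $x\in\A^*$, and choosing $x$ with $|\varphi_R(x)|\geq|w|$ shows that both $w$ and $\til{w}$ are prefixes of $\varphi_R(x)$; since they have the same length, $w=\til{w}$ follows at once. This also repairs the small lacuna in your uniqueness argument, which silently assumes the two conjugate words are prefix-comparable (your case $w'=wx$); the detour through commutation and cyclicity is then unnecessary, although not wrong.
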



We are interested in the defect of fixed points of  primitive marked morphisms.
We can consider, instead of the marked morphism  $\varphi$, a suitable power of $\varphi$.
Thus, without loss of generality  we assume that $\varphi$ is well marked and  that $\Fst(\varphi_L)=\Lst(\varphi_R) = {\rm Id}$.  For such $\varphi$ with the conjugacy word $w$ of $\varphi_L \triangleright \varphi_R$ we define the mapping $\Phi : \A^* \to \A^*$ by
$$
 \Phi(u) = \varphi_R(u)w \ \ \text{for all  } \  u \in \A^*\,.
$$

As $\varphi$ is primitive, each of its powers and also each  of its conjugates
have the same language.  Moreover, if we assume that $\bu$  is palindromic,
we  can deduce  using \cite[Lemma 15, Lemma 27, Prop. 28]{LaPe14}
remarkable  properties  of the mapping $\Phi$.


\begin{lemma}[\rm\cite{LaPe14}] \label{lem:Phi5properties}
Let $\bu\in\A^\N$ and $u \in \A^*$.
If  $\varphi$  satisfies assumptions of  Theorem \ref{th:conjugate_palindromes},
we have
\begin{enumerate}[\rm(I)]
\item If $u \in \Lu$, then $\Phi(u) \in \Lu$.
\item \label{item:two} $\til{\Phi(u)} = \Phi(\til{u})$.
\item The word $u$ is a palindrome if and only if $\Phi(u)$ is a palindrome.
\item \label{prop:extensions} For any $a,b \in \A$, $aub \in \Lu$ implies $a \Phi(u) b \in \Lu$.
\item If $u$ is a palindromic (respectively non-palindromic) bispecial factor,
    then $\Phi(u)$ is a palindromic (respectively non-palindromic) bispecial factor.
\end{enumerate}
\end{lemma}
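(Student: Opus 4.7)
The plan hinges on one algebraic identity coming from the conjugacy. Iterating the letter-wise relation $\varphi_R(a)w=w\varphi_L(a)$ to arbitrary words gives $\varphi_R(u)w=w\varphi_L(u)$ for every $u\in\A^*$, yielding the two dual presentations
\[
\Phi(u) \;=\; \varphi_R(u)\,w \;=\; w\,\varphi_L(u)
\]
used throughout. The other ingredients are \Cref{th:conjugate_palindromes} (providing $\til{w}=w$ and $\til{\varphi_R(a)}=\varphi_L(a)$ for each $a\in\A$), the normalization $\Fst(\varphi_L)=\Lst(\varphi_R)=\mathrm{Id}$ (so $\varphi_L(a)$ starts with $a$ and $\varphi_R(a)$ ends with $a$), and the injectivity of $\varphi_R$, a standard consequence of $\Lst(\varphi_R)$ being injective for a marked morphism.

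I would dispatch (II) and (III) as a warm-up. For (II) the computation
\[
\til{\Phi(u)} \;=\; \til{\varphi_R(u)\,w} \;=\; \til{w}\,\til{\varphi_R(u)} \;=\; w\,\varphi_L(\til{u}) \;=\; \Phi(\til{u})
\]
invokes the letter-wise palindromic identities and then the dual presentation. Property (III) is immediate: $u=\til{u}$ gives $\Phi(u)=\Phi(\til{u})=\til{\Phi(u)}$ by (II), and conversely $\Phi(u)=\til{\Phi(u)}=\Phi(\til{u})$ combined with injectivity of $\Phi$ (inherited from injectivity of $\varphi_R$, since appending $w$ is an injection) forces $u=\til{u}$.

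The substantial step is (I), after which (IV) and (V) follow cleanly. Since $\L(\bu)$ coincides with the language of any fixed point of a suitable power of $\varphi_R$, any $u \in \Lu$ sits at some position $i$ inside such a fixed point $\bu_R$; applying $\varphi_R$ to $\bu_R=\varphi_R(\bu_R)$ places $\varphi_R(u)$ inside $\bu_R$ followed by $\varphi_R(s_0)\varphi_R(s_1)\cdots$ for letters $s_j=(\bu_R)_{i+|u|+j}$. The identity $\varphi_R(s_0\cdots s_k)w = w\,\varphi_L(s_0\cdots s_k)$ forces $w$ to be a prefix of $\varphi_R(s_0\cdots s_k)$ as soon as $|\varphi_R(s_0\cdots s_k)|\geq |w|$ (which holds for some $k$ by primitivity), so $\Phi(u)=\varphi_R(u)w$ appears as a factor of $\bu_R$, i.e., in $\Lu$. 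For (IV), the rewriting
\[
\Phi(aub) \;=\; \varphi_R(a)\,\varphi_R(u)\,\varphi_R(b)\,w \;=\; \varphi_R(a)\,\varphi_R(u)\,w\,\varphi_L(b) \;=\; \varphi_R(a)\,\Phi(u)\,\varphi_L(b)
\]
combined with $\varphi_R(a)$ ending in $a$ and $\varphi_L(b)$ starting with $b$ exhibits $a\,\Phi(u)\,b$ as a factor of $\Phi(aub)\in\Lu$. For (V), each left (resp. right) letter extension of $u$ can be completed by recurrence of $\bu$ to a two-sided extension $aub\in\Lu$, which via (IV) supplies the corresponding extension of $\Phi(u)$; thus $\Phi(u)$ inherits bispeciality while (III) preserves palindromicity. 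The only delicate bookkeeping I expect is the length comparison controlling the fit of $w$ inside the $\varphi_R$-decomposition in (I); I do not foresee a deeper obstacle.
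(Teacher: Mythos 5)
Your proposal is correct and follows essentially the same route as the paper: the dual presentations $\Phi(u)=\varphi_R(u)w=w\varphi_L(u)$ together with $\til{w}=w$ and $\til{\varphi_R(a)}=\varphi_L(a)$ give (II)--(IV), injectivity of the marked morphism gives (III), and recurrence plus (III)--(IV) give (V). Your treatment of (I) inside a fixed point of a power of $\varphi_R$ is just a rephrasing of the paper's argument, which appends a factor $v$ with $|\varphi_L(v)|\geq|w|$ and reads $\varphi_R(u)w$ off as a prefix of $\varphi_R(uv)$.
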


\begin{proof}
(I)
Let us find $v$ such that  $uv \in \Lu$ with $|\varphi_L(v)| \geq w.$
We have
\[
\varphi_R(uv)w = \varphi_R(u)w\varphi_L(v).
\]
Since $\varphi_R(uv) \in \Lu$, by erasing a suffix of length greater than or equal to $|w|$ from  $ \varphi_R(u)w\varphi_L(v)$ we obtain a factor of $\Lu$, in particular    $\varphi_R(u)w \in \Lu$.

(II)
Let $u = u_1u_2 \cdots u_n$ with $u_i \in \A$.
We obtain
\[
\Phi(u)
= w \varphi_L(u_1) \cdots \varphi_L(u_n)
= \varphi_R(u_1) \cdots \varphi_R(u_n) w.
\]
Using \Cref{th:conjugate_palindromes} we obtain
\[
\til{\Phi(u)}
 = \til{w} \til{\varphi_R(u_n)} \cdots \til{\varphi_R(u_1)}
 = w \varphi_L(u_n) \cdots \varphi_L(u_1)
 = \Phi(\til{u}).
\]

(III)
Let us note that any marked morphism is injective and thus $\Phi$ is injective as well.
    If $u$ is a palindrome, then $\til{\Phi(u)}=\Phi(\til{u})=\Phi(u)$
    from \Cref{item:two}, therefore $\Phi(u)$ is a palindrome.
    Conversely, if $\Phi(u)$ is a palindrome, then
    $\Phi(u)=\til{\Phi(u)}=\Phi(\til{u})$.
    As $\varphi_L$ is injective, $\Phi$ is injective and the claim follows.

(IV)
Let $aub \in \Lu$.
We have $\Phi(aub) \in \Lu$ and
\[
\Phi(aub) = \varphi_R(a) \varphi_R(u) w_\varphi \varphi_L(b) = \varphi_R(a) \Phi(u) \varphi_L(b).
\]
By our assumption,  $\Lst(\varphi_R)(c) = \Fst(\varphi_L)(c) = {\rm Id}(c)=c$ for any $c\in \A$.
Thus, $a \Phi(u) b$ is a factor $\Phi(aub) \in \Lu$.

(V)
The statement follows from the previous properties.
\end{proof}

\section{Extension graphs of a factor}

To study the Zero Defect Conjecture on a multiliteral alphabet,
we assign graphs to palindromic and non-palin\-dromic  bispecial factors.
These graphs were used already in \cite{BaPeSta2} where only words with zero defect are considered.
These graphs enable to represent
extensions of a bispecial factor and to determine factor complexity, see \cite[p.234--235]{MR2759107}.
They also appear in a more general context in
\cite{BeClDoLePeReRi}. We use these graphs to demonstrate that the definition
of bilateral multiplicity of bispecial factors is related to basic notions of
graph theory which we use later in the proofs.


\begin{definition}[$\mathbf{\Gamma(w)}$]
Let $\bu\in\A^\N$.
We assign to a factor $w\in\L(\bu)$ the bipartite \emph{extension graph}
$\Gamma(w)=(V,U)$ whose
vertices $V$ consist of the disjoint union of $E^-(w)$ and $E^+(w)$
\[
    V=\left(E^-(w)\times\{-1\}\right) \cup
      \left(E^+(w)\times\{+1\}\right)
\]
and whose edges $U$ are essentially the elements of $E(w)$:
\[
    U = \left\{\left\{ (a,-1), (b,+1)\right\}\colon (a,b)\in E(w)\right\}.
\]
\end{definition}

\begin{lemma}\label{lem:gammagraphtheory}
If $\Gamma(w)$ is connected, then $m(w)\geq 0$ and
\begin{itemize}
\item $m(w)>0$ if and only if $\Gamma(w)$ contains a cycle,
\item $m(w)=0$ if and only if $\Gamma(w)$ is a tree.
\end{itemize}
\end{lemma}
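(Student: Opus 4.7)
The plan is to observe that $m(w)$ is nothing other than the cyclomatic number (circuit rank) of the graph $\Gamma(w)$, after which the lemma becomes a direct application of elementary graph theory.

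First I would unpack the definition of $\Gamma(w)$ to count its vertices and edges. Since $\Gamma(w)$ is bipartite with the two sides being (copies of) $E^-(w)$ and $E^+(w)$, the vertex set has cardinality $|V| = \#E^-(w) + \#E^+(w)$. The edges are in bijection with $E(w)$, so $|U| = \#E(w)$. Substituting into the definition of bilateral multiplicity, I get
\[
m(w) = \#E(w) - \#E^+(w) - \#E^-(w) + 1 = |U| - |V| + 1,
\]
which is precisely the cyclomatic number of the graph $\Gamma(w)$.

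The rest is standard. I would invoke the well-known facts that any connected graph on $|V|$ vertices has at least $|V|-1$ edges, with equality if and only if it is a tree, and that any connected graph with at least $|V|$ edges contains a cycle. Applied to $\Gamma(w)$, the first fact gives $m(w) = |U| - |V| + 1 \geq 0$, and equality holds precisely when $\Gamma(w)$ is a tree, establishing the second bullet. The second fact, together with the observation that a tree has no cycles, yields that $m(w) > 0$ if and only if $\Gamma(w)$ contains a cycle, proving the first bullet.

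There is no serious obstacle here. The only step that requires any care is the bookkeeping in translating $m(w)$ to $|U|-|V|+1$; in particular one must use the fact that the two parts $E^-(w)$ and $E^+(w)$ of the vertex set are treated as disjoint in $V$ (hence the use of the tags $\pm 1$ in the definition), so that $|V|$ is a true sum rather than a union with possible overlap.
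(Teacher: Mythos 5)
Your proposal is correct and follows essentially the same route as the paper's proof: identify $m(w)$ with $\#U-\#V+1$ for $\Gamma(w)$ (using that the bipartition makes the vertex count a genuine sum and the edges are in bijection with $E(w)$), then apply the standard characterisations of trees and cycle-containing connected graphs. No gaps.
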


\begin{proof}
    Let $G=(V,U)$ be a graph with vertices $V$ and edges $U$.
    If $G$ is connected then $\#U-\#V+1\geq 0$.
    A connected graph $G=(V,U)$ is a tree if and only if $\#U-\#V\#+1=0$ and it
    contains a cycle if and only if $\#U-\#V+1>0$.
    In the case of the graph $\Gamma(w)$, it turns out that
    \[
	\#U-\#V+1 = \#E(w) - \#E^-(w) -\#E^+(w) + 1 = m(w).\qedhere
    \]
\end{proof}

Another graph will be useful in the case when $w=\til{w}$ and when the
language $\Lu$ is closed under reversal. These two hypotheses imply that
$E^-(w)=E^+(w)$ and that $E(w)$ is symmetric, i.e. $(a,b) \in E(w)$ if and only if $(b,a) \in E(w)$.
\begin{definition}[$\mathbf{\Theta(w)}$]
Let $\bu\in\A^\N$ having its language closed under reversal.
To a palindromic factor $w\in\L(\bu)$ we assign a graph $\Theta(w)=(V,U)$ whose
vertices $V=E^-(w)=E^+(w)$ are exactly the right (or left) extensions of $w$ and
whose edges $U$ are unordered pairs of distinct elements of $E(w)$:
\[
    U = \left\{\left\{ a, b\right\} \colon (a,b)\in E(w), a\neq b\right\}.
\]
\end{definition}
In particular, $\Theta(w)$ does not contain loops.

The next lemma uses the both-sided symmetric extensions of a factor $w$ which are denoted by
\[
E^=(w)  = \{a \in \A \colon awa \in \Lu \}.
\] 

\begin{lemma}\label{lem:thetagraphtheory}
Suppose that the language $\Lu$ is closed under reversal and $w=\til{w}$.
If $\Theta(w)$ is connected, then $m(w)\geq \#E^=(w)-1$ and
\begin{itemize}
    \item  $m(w)>\#E^=(w)-1$ if and only if $\Theta(w)$ contains a cycle,
    \item  $m(w)=\#E^=(w)-1$ if and only if $\Theta(w)$ is a tree.
\end{itemize}
\end{lemma}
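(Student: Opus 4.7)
The plan is to follow the template of the proof of \Cref{lem:gammagraphtheory}, but with additional bookkeeping to account for the loops that are suppressed when passing from $E(w)$ to $\Theta(w)$. First I would unpack the hypotheses: because $w=\til{w}$ and $\Lu$ is closed under reversal, we have $E^-(w)=E^+(w)=V$ and $E(w)$ is symmetric, so the set of ordered pairs in $E(w)$ splits canonically into the diagonal $\{(a,a)\colon a\in E^=(w)\}$ and the off-diagonal transpositions $\{(a,b),(b,a)\}$ with $a\neq b$. The diagonal pairs are in bijection with $E^=(w)$, while each off-diagonal transposition corresponds to a single unordered edge $\{a,b\}\in U$ of $\Theta(w)$. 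This yields the counting identity
\[
\#E(w) = 2\,\#U + \#E^=(w),
\]
together with $\#E^-(w)=\#E^+(w)=\#V$.

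Substituting these into the definition of bilateral multiplicity,
\[
m(w) = \#E(w) - \#E^-(w) - \#E^+(w) + 1 = 2(\#U-\#V+1) + \#E^=(w) - 1,
\]
so that $m(w)-(\#E^=(w)-1) = 2(\#U-\#V+1)$. The remainder of the proof is then a direct appeal to elementary graph theory, exactly as in the proof of \Cref{lem:gammagraphtheory}: a connected graph $G=(V,U)$ satisfies $\#U-\#V+1\geq 0$, with equality if and only if $G$ is a tree and strict inequality if and only if $G$ contains a cycle. Combining this with the displayed identity gives the three claims in one stroke.

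There is no genuine obstacle here; the only point requiring care is the separation of the contributions from $E(w)$ into loops (corresponding to $E^=(w)$ and suppressed from $\Theta(w)$, hence producing the offset $\#E^=(w)-1$) and non-loops (producing the factor $2$ in front of $\#U-\#V+1$, because each undirected edge of $\Theta(w)$ arises from two ordered pairs in the symmetric set $E(w)$). Once this accounting is done, the lemma is a straightforward analogue of \Cref{lem:gammagraphtheory} transferred from the bipartite graph $\Gamma(w)$ to the loopless simple graph $\Theta(w)$.
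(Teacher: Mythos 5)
Your proposal is correct and follows essentially the same route as the paper: the same counting identity (the paper computes $\#U-\#V+1=\tfrac12\bigl(m(w)-\#E^=(w)+1\bigr)$, which is your $m(w)-(\#E^=(w)-1)=2(\#U-\#V+1)$ rearranged), followed by the same appeal to the elementary facts about connected graphs used for $\Gamma(w)$ in \Cref{lem:gammagraphtheory}. No gaps; your explicit bookkeeping of loops versus transposition pairs in the symmetric set $E(w)$ is exactly the point the paper's shorter computation relies on.
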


\begin{proof}
    Using the same argument as for the previous lemma, we compute that
    \[
	\#U = \frac{1}{2}\left(\#E(w)-\#E^=(w)\right)
	\quad\text{and}\quad
	\#V = \#E^-(w) = \#E^+(w).
    \]
    Therefore,
    \begin{align*}
	\#U-\#V+1 &= \frac{1}{2}\left(\#E(w)-\#E^=(w)
                    - \#E^-(w) - \#E^+(w)\right)  +1\\
            &= \frac{1}{2}\left(m(w)-\#E^=(w) + 1\right)\qedhere
    \end{align*}
\end{proof}

\begin{example}  \label{example:bucci-vaslet}
    Let  $\bu$ be   the fixed point of the substitution
$\eta:a \mapsto aabcacba, b \mapsto aa, c \mapsto a$ used by Bucci and Vaslet.
The list of all factors of length 2 is:
\[
aa, ab,ac, ba,ca, bc,cb.
\]
The list of all factors of length 3 is:
\[
aaa, aab, abc,acb, baa, bca, cac, cba.
\]
This allows to construct the graphs  $\Theta(w)$  and $\Gamma(w)$ for
$w\in\{\varepsilon,a,b,c\}$ (see Fig.~\ref{fig:examplesgraph}) and the
following table of values for the bilateral multiplicity:
\[
    \begin{array}{c|ccccc}
	  w  & \varepsilon & a & b & c\\
	\hline
	m(w) & 2 & -1 & -1 & -1\\
  \#E^=(w)-1 & 0 & 0  & -1 & -1
    \end{array}.
\]

\begin{figure}[h!]
\begin{center}
\includegraphics[width=0.95\linewidth]{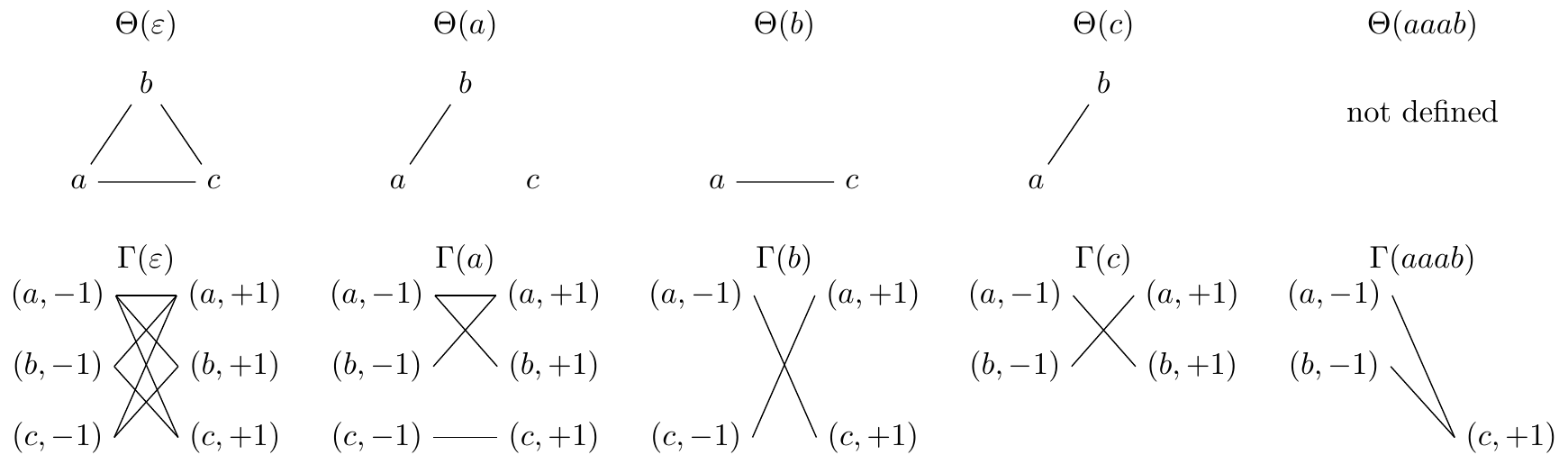}
\end{center}
\caption{Example of graphs $\Theta(w)$ and $\Gamma(w)$  for
$w\in\{\varepsilon,a,b,c,aaab\}$ in the language of
the fixed point of the morphism
$a \mapsto aabcacba, b \mapsto aa, c \mapsto a$.}
\label{fig:examplesgraph}
\end{figure}

\begin{enumerate}
\item The graph $\Theta(\varepsilon)$  has vertices  $V=\{a,b,c\}$ and edges
$U=\bigl\{ \{a,b\}, \{a,c\}, \{b,c\}  \bigr\}$. The graph $\Theta(\varepsilon)$
contains a cycle. The bilateral multiplicity  equals $m(\varepsilon) =2 > 0
=\#E^=(\varepsilon)-1$.

\item The graph $\Theta(a)$  has vertices $V=\{a,b,c\}$ and edges $U=\bigl\{
\{a,b\} \bigr\}$. The graph $\Theta(a)$ is not connected.  The bilateral
multiplicity  equals $m(a) =-1  <  0=\#E^=(a)-1$.

\item The graph $\Theta(b)$  has vertices $V=\{a,c\}$ and edges $U=\bigl\{
\{a,c\} \bigr\}$. The graph $\Theta(b)$ is a tree.  The bilateral multiplicity
equals $m(b) =-1  =\#E^=(b)-1$.
\end{enumerate}
It is easy to see that the graph $\Theta(c)$ is isomorphic to $\Theta(b)$.
The construction of the graphs $\Gamma(w)$ is analogous.
From the extension set $E(aaab)=\{(a,c), (b,c)\}$ of
the non-palindromic left special word $w=aaab$, the graph
$\Gamma(aaab)$ can be constructed (see Fig.~\ref{fig:examplesgraph}). Notice
that it is a tree.

\end{example}

\begin{longversion}

\section{Words with finite palindromic defect}

The graphs introduced in the previous section allow to interpret the palindromic defect in terms of graph theory.
In this section we focus on properties of graphs of a factor under the assumption of finite palindromic defect (\Cref{th:finite_defect_bs_multiplicities} and \Cref{cor:wednesday}).
In \Cref{sec:multimarked}, we study properties of a graph of a factor under the assumption of positive palindromic defect (\Cref{th:tuesday}).

The proof of the main results of this section, namely \Cref{th:finite_defect_bs_multiplicities}, can be excerpted
from \cite[proof of Theorem 3.10]{BaPeSta2}.
However, the mentioned theorem  has a stronger assumption (the palindromic defect of ${\bf u}$ is zero) and it is not stated in terms of graphs as done below in \Cref{cor:wednesday}.
Therefore,  \Cref{th:finite_defect_bs_multiplicities} is accompanied here with an independent proof.
The proof requires the next two lemmas, which  explain the link between complete mirror return word to a
factor $w$ and the connectedness of its associated graphs, and a proposition on the relation of factor and palindromic complexity in a word having finite palindromic defect.


\begin{lemma}\label{lem:is-an-edge}
Let $\bu\in\A^\N$ have its language closed under reversal. Suppose that $v$ is a
palindromic complete mirror return word to $w\in\L(\bu)$ such that $b\til{w}$
is a suffix of $v$ and $av\in\L(\bu)$ for some letters $a,b\in\A$. Then
$\{(a,-1),(b,+1)\}$ is an edge of the graph $\Gamma(w)$. If $w$ is a
palindrome and $a\neq b$, then $\{a,b\}$ is an edge of the graph $\Theta(w)$.
\end{lemma}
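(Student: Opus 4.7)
The plan is to read off the letters $a$ and $b$ as, respectively, a left and right extension of $w$, by exploiting the fact that $v$ is a palindrome.

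First I would check that $w$ is a prefix of $v$. By the definition of a complete mirror return, either $w$ is a prefix and $\til{w}$ is a suffix of $v$, or $\til{w}$ is a prefix and $w$ is a suffix of $v$. The hypothesis that $b\til{w}$ is a suffix of $v$ forces $\til{w}$ to be a suffix; if we were only in the second case, then $w$ would also be a suffix of $v$ of the same length $|w|=|\til{w}|$, hence $w=\til{w}$, and the two cases would coincide. Thus in every case $w$ is a prefix of $v$.

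Next I would use the palindrome hypothesis. Since $v=\til{v}$ and $b\til{w}$ is a suffix of $v$, reversing shows that $wb$ is a prefix of $v$. Combined with the hypothesis $av\in\L(\bu)$, this gives $awb\in\L(\bu)$, i.e.\ $(a,b)\in E(w)$. By definition of $\Gamma(w)$, the pair $\{(a,-1),(b,+1)\}$ is an edge of $\Gamma(w)$. Under the additional hypotheses $w=\til{w}$ and $a\neq b$, the pair $\{a,b\}$ is, by definition, an edge of $\Theta(w)$.

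I do not expect a significant obstacle here: the proof is essentially bookkeeping on the structure of $v$ followed by a direct application of the definitions of $\Gamma(w)$ and $\Theta(w)$. The only point that requires a brief justification is the claim that $w$ is a prefix of $v$ in the corner case $w\neq\til{w}$, which is handled by the argument above.
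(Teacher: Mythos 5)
Your proposal is correct and follows essentially the same route as the paper: reverse the suffix $b\til{w}$ using the palindromicity of $v$ to see that $wb$ is a prefix of $v$, so $awb$ is a prefix of $av\in\L(\bu)$, giving $(a,b)\in E(w)$ and hence the claimed edges of $\Gamma(w)$ and $\Theta(w)$. Your preliminary paragraph establishing that $w$ is a prefix of $v$ is correct but not actually needed, since the reversal step already yields everything required.
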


\begin{proof}
    Let $s\in\A^*$ such that $v=sb\til{w}$. Since $v$ is a palindrome,
    we get $v=wb\til{s}$. Therefore, $awb\in\L(\bu)$ being a
    prefix of $av$ and $(a,b)\in E(w)$. We conclude that $\{(a,-1),(b,+1)\}$
    is an edge of the graph $\Gamma(w)$. Also if $w=\til{w}$ and $a\neq b$,
    we conclude that $\{a,b\}$ is an edge of the graph $\Theta(w)$.
\end{proof}

\begin{lemma}\label{lem:is-connected}
    Let $\bu\in\A^\N$ have its language closed under reversal, $w\in\L(\bu)$ and suppose
    that occurrences of $w$ and $\til{w}$ alternate in $\bu$.
    Suppose that all complete mirror return words to $w$ are palindromes.
    Then $\Gamma(w)$ is connected.
    If $w$ is a palindrome, then $\Theta(w)$ is connected.
\end{lemma}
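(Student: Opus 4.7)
My approach is to walk along the list of occurrences of $w$ and $\til{w}$ in $\bu$ and read off edges of $\Gamma(w)$ (respectively $\Theta(w)$) from the palindromic mirror returns that link consecutive occurrences. I would denote these occurrences by $(O_j)_{j\in\N}$, chosen so that $O_j$ corresponds to $w$ for $j$ even and to $\til{w}$ for $j$ odd (the alternation hypothesis forces this pattern up to a shift, and reversal closure together with the occurrence of $w$ in $\Lu$ ensures that both $w$ and $\til{w}$ occur infinitely often), and write $\ell_j, r_j \in \A$ for the letters of $\bu$ immediately preceding and following $O_j$.

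For each $j$ the complete mirror return $v^{(j)}$ from $O_j$ to $O_{j+1}$ is a palindrome by assumption. If $j$ is even, then $v^{(j)}$ has $w$ as prefix and $\til{w}$ as suffix, so \Cref{lem:is-an-edge} applies directly and yields the edge $\{(\ell_j,-1),(r_j,+1)\}$ in $\Gamma(w)$. If $j$ is odd, then $v^{(j)}$ has $\til{w}$ as prefix and $w$ as suffix; applying \Cref{lem:is-an-edge} with $\til{w}$ in place of $w$ gives $\ell_j\til{w}\ell_{j+1}\in\Lu$, and closure of $\Lu$ under reversal converts this to $\ell_{j+1}w\ell_j\in\Lu$, i.e.\ the edge $\{(\ell_{j+1},-1),(\ell_j,+1)\}$ in $\Gamma(w)$. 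In either parity the palindromicity of $v^{(j)}$ equates the letter just after its prefix with the letter just before its suffix, which yields the key identification $r_j = \ell_{j+1}$, so successive extracted edges share an endpoint.

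Chaining these edges produces a walk through the vertex sequence $\ldots,(\ell_0,-1),(r_0,+1),(\ell_2,-1),(r_2,+1),(\ell_4,-1),\ldots$ of $\Gamma(w)$. Every $a\in E^-(w)$ is the predecessor of some occurrence of $w$ in $\bu$ and therefore equals $\ell_{2k}$ for some $k$; similarly every $b\in E^+(w)$ equals $r_{2k}$ for some $k$. Hence the walk visits every vertex, proving that $\Gamma(w)$ is connected. When $w=\til{w}$ the argument specializes: every $O_j$ is an occurrence of $w$, the alternation hypothesis is vacuous, and \Cref{lem:is-an-edge} delivers the edge $\{\ell_j, r_j\} = \{\ell_j, \ell_{j+1}\}$ in $\Theta(w)$ whenever $\ell_j\neq\ell_{j+1}$; the sequence $(\ell_j)$ visits every vertex of $\Theta(w)$, with any two consecutive distinct entries joined by an edge, so $\Theta(w)$ is connected as well.

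The main obstacle is the odd-$j$ case: \Cref{lem:is-an-edge} as stated produces only an edge of $\Gamma(\til{w})$, which has to be transported to $\Gamma(w)$ via closure under reversal. Tracking which side of $\Gamma(w)$ each letter $\ell_j, r_j$ inhabits — it swaps between $E^-(w)$ and $E^+(w)$ with the parity of $j$ — is where the bookkeeping is most delicate, and where verifying the identification $r_j = \ell_{j+1}$ must be done with care.
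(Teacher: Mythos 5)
Your proof is correct and is essentially the paper's own argument: the paper likewise walks along consecutive occurrences of factors of $\A w\cup\A\til{w}$, extracts an edge from each palindromic mirror return via \Cref{lem:is-an-edge} (using closure under reversal to flip the edges coming from returns that begin with $\til{w}$ back into $\Gamma(w)$), and chains them; it merely localises the walk to a factor $v$ running from an occurrence of $aw$ to one of $b\til{w}$ for each pair of vertices instead of your single global walk, and your identification $r_j=\ell_{j+1}$ is exactly the palindromicity step inside \Cref{lem:is-an-edge}. The only discrepancy with the statement as written is that you obtain connectedness of $\Gamma(w)$ only when $w\neq\til{w}$, treating palindromic $w$ solely through $\Theta(w)$. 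That restriction is in fact the correct scope: for $w=\til{w}$ the $\Gamma$-claim can fail under these hypotheses --- in $\bu=(aabb)^{\omega}$ the palindrome $w=a$ has all its complete (mirror) returns equal to $aa$ or $abba$, hence palindromes, and the language is closed under reversal, yet $E(a)=\{(a,b),(b,a)\}$, so $\Gamma(a)$ is the disjoint union of the two edges $\{(a,-1),(b,+1)\}$ and $\{(b,-1),(a,+1)\}$ --- and the paper's own proof of that part tacitly assumes $w\neq\til{w}$ when it labels the consecutive occurrences alternately as $a_1w,b_1\til{w},\ldots,a_nw,b_n\til{w}$; in the later applications only ``$\Gamma(w)$ connected for non-palindromic $w$'' and ``$\Theta(w)$ connected for palindromic $w$'' are used, which is precisely what you prove. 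Two small points you leave implicit: the first occurrence of $w$ may sit at position $0$ with no preceding letter (start the walk one occurrence later), and the claim that $w$ and $\til{w}$ occur infinitely often rests on the standard fact that closure under reversal implies recurrence, a fact the paper also invokes without proof.
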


\begin{proof}
    It suffices to show that there is a path from
    any vertex $(a,-1)$ to any vertex $(b,+1)$ in $\Gamma(w)$.
    Let $(a,-1)$ and $(b,+1)$ be two vertices of $\Gamma(w)$.
    Then $aw,wb\in\Lu$.
    Let $v\in\Lu$ be such that $aw$ is a prefix of $av$ and $b\til{w}$ is a
    suffix of $av$.
    If there are no other occurrences of factors of $\A w\cup\A\til{w}$ in $v$,
    then $\{(a,-1),(b,+1)\}$ is an edge of the graph $\Gamma(w)$ from
    \Cref{lem:is-an-edge}.
    Suppose that
    \[
	a_1w,\, b_1\til{w},\,
	a_2w,\, b_2\til{w},\,
	\ldots\,,
	a_nw,\, b_n\til{w}
    \]
    are consecutive occurrences of factors of $\A w\cup\A\til{w}$ in $v$ where
    $a=a_1$, $b=b_n$ and $n\geq 2$.
    From \Cref{lem:is-an-edge}, $\{(a_i,-1),(b_i,+1)\}$ is an edge of the graph
    $\Gamma(w)$ for all $i$ with $1\leq i\leq n$. Also,
    $\{(a_{i+1},-1),(b_i,+1)\}$ is an edge of the graph
    $\Gamma(w)$ for all $i$ with $1\leq i\leq n-1$.
    Therefore, we conclude that there exists a path from
    $(a,-1)$ to $(b,+1)$.

    Assume $w=\til{w}$.
    Let $a,b\in E^-(w)=E^+(w)$ be two distinct vertices of
    $\Theta(w)$. Then $aw,bw\in\L(\bu)$.
    We want to show that there exists a path from $a$ to
    $b$ in $\Theta(w)$.
    Among the occurrences of factors in $\A w$, if there exist two
    consecutive occurrences of $aw$ and $bw$, then
    $\{a,b\}$ is an edge of $\Theta(w)$ from
    \Cref{lem:is-an-edge}. Otherwise, we conclude that there exists a path from
    $a$ to $b$ by transitivity.
\end{proof}

\begin{corollary} \label{co:tree_and_multiplicities} 
Let $\bu\in\A^\N$ have its language closed under reversal.
If $D(\bu) < +\infty$, then there exists an integer $K$ such that for each bispecial factor $w \in \Lu$ with $|w| \geq K$ the graph $\Gamma(w)$ is  connected. If $w$ is moreover a palindrome,  then also the graph  $\Theta(w)$ is connected.
\end{corollary}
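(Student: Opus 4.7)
The plan is to combine the earlier characterization of finite defect in terms of mirror returns (\Cref{th:mrs_are_palindromes}) with \Cref{lem:is-connected}, which already converts the palindromicity of complete mirror returns into connectedness of the extension graphs. Virtually all the work has been done; what remains is essentially a two-step chaining.

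First, I would dispose of the ultimately periodic case separately. If $\bu$ is ultimately periodic, then its factor complexity $\C(n)$ is eventually constant, so from \eqref{eq:FaCoBiMu} we have $\Delta^2\C(n)=0$ for large $n$ and in fact there are no bispecial factors of length exceeding some threshold $K$; the statement is then vacuously true. Thus it suffices to treat the aperiodic case, where we can invoke \Cref{th:mrs_are_palindromes}.

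Second, since $D(\bu)<+\infty$ and $\L(\bu)$ is closed under reversal, \Cref{th:mrs_are_palindromes} yields a constant $K$ such that for every factor $w\in\Lu$ of length $|w|\geq K$, the occurrences of $w$ and $\til{w}$ alternate in $\bu$ and every complete mirror return to $w$ in $\bu$ is a palindrome. For each such $w$ (bispecial or not), the two hypotheses of \Cref{lem:is-connected} are therefore satisfied, and we conclude that $\Gamma(w)$ is connected. If $w$ is in addition a palindrome, the same lemma gives connectedness of $\Theta(w)$ as well. Restricting the conclusion to bispecial factors $w$ with $|w|\geq K$ yields exactly the statement of the corollary.

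I do not foresee any real obstacle: the nontrivial content is encapsulated in \Cref{th:mrs_are_palindromes} and \Cref{lem:is-connected}. The only minor subtlety is the periodic case, which is handled by the factor-complexity remark above, and making sure the constant $K$ from \Cref{th:mrs_are_palindromes} is transported without modification to the corollary.
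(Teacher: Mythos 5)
Your proof is correct and follows essentially the same route as the paper: dismiss the (ultimately) periodic case by noting that only finitely many bispecial factors exist, then in the aperiodic case chain \Cref{th:mrs_are_palindromes} with \Cref{lem:is-connected} to get connectedness of $\Gamma(w)$, and of $\Theta(w)$ when $w$ is a palindrome. (Minor remark: in the periodic case the conclusion really comes from $\Delta\C(n)=0$ eventually, hence no special factors of large length, rather than from $\Delta^2\C(n)=0$, which by itself would still permit neutral bispecial factors.)
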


\begin{proof}
  If $\bu$ is not aperiodic, then the claim is trivially satisfied as there is
only a finite number of bispecial factors.

  If $\bu$ is aperiodic, \Cref{th:mrs_are_palindromes} implies that there exists a positive integer $K$ such that for every factor $w \in \Lu$ longer than $K$, the occurrences of $w$ and $\til{w}$ alternate and every complete mirror return to  $w$  in $\bu$ is a palindrome.
We conclude from \Cref{lem:is-connected} that the graph $\Gamma(w)$ is
connected. Also if $w$ is a palindrome, then $\Theta(w)$ is connected.
\end{proof}

The following claim may be deduced from \cite{BaPeSta5}.

\begin{proposition}[{\cite[Th. 2, Prop. 6]{BaPeSta5}}]  \label{prop:factvspalcomplexity}
Let $\bu\in\A^\N$ have its language closed under reversal.
If $D(\bu) < +\infty$, then there exists an integer $M$ such that for all $n
\geq M$ we have
\[
\Delta^2 \C(n) = \P(n+2) - \P(n).
\]
\end{proposition}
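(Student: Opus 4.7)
The plan is to reduce the statement to the intermediate identity
\begin{equation}
\Delta \C(n) + 2 = \P(n) + \P(n+1) \qquad \text{for all } n\geq M, \label{eq:interm}
\end{equation}
from which the proposition follows by a single difference: for $n \geq M$,
$$
\Delta^2 \C(n) \;=\; \Delta\C(n+1) - \Delta\C(n) \;=\; \bigl(\P(n+1)+\P(n+2)\bigr) - \bigl(\P(n)+\P(n+1)\bigr) \;=\; \P(n+2) - \P(n).
$$
So the real content is establishing \eqref{eq:interm} under the hypothesis $D(\bu)<\infty$.

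To prove \eqref{eq:interm}, I would start from the general inequality
$$
\Delta\C(n) + 2 \;\geq\; \P(n) + \P(n+1),
$$
which holds for every infinite word whose language is closed under reversal (the palindromic complexity inequality of Bal\'a\v{z}i--Mas\'akov\'a--Pelantov\'a). Using \eqref{eq:FaCoBiMu} (applied with appropriate shift), the left-hand side can be rewritten as a sum of bilateral multiplicities over bispecial factors of length $n$, while the right-hand side decomposes as a sum over those same bispecial factors, each palindromic bispecial factor $w$ contributing the quantity $\#E^=(w)-1$ detected by $\Theta(w)$ and each non-palindromic one contributing $-1$ detected by $\Gamma(w)$. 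Lemmas~\ref{lem:gammagraphtheory} and~\ref{lem:thetagraphtheory} then rephrase the local surplus $m(w)-(\#E^=(w)-1)$ (resp.\ $m(w)+1$) as a nonnegative graph-theoretic quantity, vanishing exactly when the associated extension graph is a tree. The finite defect hypothesis enters through \Cref{th:mrs_are_palindromes}: there exists $K$ such that for every factor $w$ of length at least $K$ the occurrences of $w$ and $\til{w}$ alternate and every complete mirror return to $w$ is a palindrome. By \Cref{lem:is-connected}, this forces $\Gamma(w)$ (and $\Theta(w)$ when $w=\til{w}$) to be connected for all long enough bispecial factors, and combined with the known fact that under finite defect these graphs carry no cycles eventually, each such graph is a tree. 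Setting $M=K$ and summing the now-vanishing local surpluses yields equality \eqref{eq:interm}.

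The main obstacle is the bookkeeping for the decomposition into per-factor contributions: one must carefully separate the contributions of palindromic and non-palindromic bispecial factors of length $n$ to $\P(n+1)$ and to $\Delta\C(n)+2$, check that the two global quantities differ by $\sum_{w}\mu(w)$ for a nonnegative local quantity $\mu(w)$ governed by $\Gamma(w)$ or $\Theta(w)$, and confirm that acyclicity of these graphs for long $w$ is indeed a consequence (not merely a reflection) of finite defect. Once this local/global correspondence is set up, the connectedness conclusion of \Cref{co:tree_and_multiplicities} together with the absence of cycles for long bispecial factors closes the argument and \eqref{eq:interm} is proved, completing the proof.
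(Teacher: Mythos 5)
Your reduction to the first-difference identity $\Delta\C(n)+2=\P(n)+\P(n+1)$ for large $n$, followed by a single differencing, is exactly the paper's route; the paper obtains that identity by citing Proposition~6 of \cite{BaPeSta5} for the inequality $\Delta\C(n)+2\geq\P(n+1)+\P(n)$ and Theorem~2 of \cite{BaPeSta5} for the upgrade to equality under $D(\bu)<+\infty$. The gap is in your attempt to reprove the equality internally: the only mechanism you offer is ``the known fact that under finite defect these graphs carry no cycles eventually,'' but inside this paper that fact is precisely \Cref{th:finite_defect_bs_multiplicities} and \Cref{cor:wednesday}, whose proofs use the very proposition you are proving. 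So as written the argument is circular. What you legitimately have from \Cref{th:mrs_are_palindromes}, \Cref{lem:is-connected} and \Cref{co:tree_and_multiplicities} is only connectedness of $\Gamma(w)$ and $\Theta(w)$ for long bispecial $w$, and via \Cref{lem:gammagraphtheory} and \Cref{lem:thetagraphtheory} this gives $m(w)\geq 0$, respectively $m(w)\geq\#E^=(w)-1$; summing with \eqref{eq:FaCoBiMu} this yields only the one-sided bound $\Delta^2\C(n)\geq\P(n+2)-\P(n)$, not the claimed equality. Acyclicity (the tree property) is the missing converse direction, and you give no independent proof of it; the honest fix is to do what the paper does and import the equality from Theorem~2 of \cite{BaPeSta5}, or else to supply a genuinely new argument excluding cycles that does not pass through \Cref{th:finite_defect_bs_multiplicities}.

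A secondary bookkeeping slip: you claim that $\P(n)+\P(n+1)$ ``decomposes as a sum over those same bispecial factors'' with palindromic ones contributing $\#E^=(w)-1$ and non-palindromic ones contributing $-1$. The correct local/global correspondence is for the second difference of the palindromic complexity: since every palindrome of length $n+2$ has a unique palindromic central factor of length $n$, one has $\P(n+2)-\P(n)=\sum_{w=\til{w},\,|w|=n}\left(\#E^=(w)-1\right)$, which is what should be compared with $\Delta^2\C(n)=\sum_{|w|=n}m(w)$. The first-difference quantity $\Delta\C(n)+2-\P(n+1)-\P(n)$ does not split over bispecial factors of length $n$ in the way you describe.
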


\begin{proof}
    Since $\Lu$ is closed under reversal,
Proposition 6 from \cite{BaPeSta5} says that
there exists an integer $M$ such that for all $n \geq M$ we have
\[
\Delta \C(n) + 2 \geq \P(n+1) + \P(n).
\]
Since $D(\bu)<+\infty$,
Theorem 2 from \cite{BaPeSta5} together with the above inequality implies that there exists an integer $M$ such that for all $n \geq M$ we have
\[
\Delta \C(n) + 2 = \P(n+1) + \P(n).
\]
From this we conclude:
\[
\Delta^2 \C(n) = \Delta \C(n+1) - \Delta \C(n) = \P(n+2) + \P(n+1) - \P(n+1) - \P(n) = \P(n+2) - \P(n).\qedhere
\]
\end{proof}

\begin{theorem} \label{th:finite_defect_bs_multiplicities} 
Let $\bu\in\A^\N$ have its language closed under reversal.
If $D(\bu) < +\infty$, then there exists an integer $K$ such that each bispecial factor $w \in \Lu$ with $|w| \geq K$ satisfies
\[
m(w) = \begin{cases} 0 & \text{ if } w \neq \til{w}, \\ \# E^=(w)-1 & \text{ if } w = \til{w}. \end{cases}
\]
\end{theorem}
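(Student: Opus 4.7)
The plan is to combine the two identities relating factor complexity and palindromic complexity with the inequalities on $m(w)$ coming from connectedness of the extension graphs. I would fix $K$ large enough so that both Corollary \ref{co:tree_and_multiplicities} applies to every bispecial factor of length $\geq K$ and Proposition \ref{prop:factvspalcomplexity} applies for every $n\geq K$.

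First I would rewrite $\P(n+2)-\P(n)$ as a single sum over palindromes of length $n$. Every palindrome of length $n+2$ is uniquely of the form $awa$ with $w$ a palindrome of length $n$ and $a\in E^=(w)$, so
\[
    \P(n+2) = \sum_{\substack{w\in\Lu,\ |w|=n\\ w=\til w}} \#E^=(w),
    \qquad
    \P(n+2)-\P(n) = \sum_{\substack{w\in\Lu,\ |w|=n\\ w=\til w}} \bigl(\#E^=(w)-1\bigr).
\]
Combining Proposition \ref{prop:factvspalcomplexity} with \eqref{eq:FaCoBiMu}, for all $n\geq K$,
\[
    \sum_{\substack{w\in\Lu,\ |w|=n\\ w\text{ bispecial}}} m(w)
    \;=\; \sum_{\substack{w\in\Lu,\ |w|=n\\ w=\til w}} \bigl(\#E^=(w)-1\bigr).
\]

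Next I would rearrange this equality by splitting bispecial factors into palindromic and non-palindromic ones, and splitting the right-hand side into bispecial and non-bispecial palindromes, obtaining
\[
    \sum_{\substack{|w|=n,\, w\ne\til w\\ w\text{ bispecial}}}\!\! m(w)
    \;+\!\!\!\! \sum_{\substack{|w|=n,\, w=\til w\\ w\text{ bispecial}}}\!\! \bigl(m(w)-\#E^=(w)+1\bigr)
    \;=\!\!\! \sum_{\substack{|w|=n,\, w=\til w\\ w\text{ not bispecial}}}\!\! \bigl(\#E^=(w)-1\bigr).
\]
By Corollary \ref{co:tree_and_multiplicities} every bispecial $w$ with $|w|\geq K$ has $\Gamma(w)$ connected, and if additionally $w=\til w$ then $\Theta(w)$ is connected. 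Lemma \ref{lem:gammagraphtheory} then gives $m(w)\geq 0$ for the non-palindromic bispecial terms, and Lemma \ref{lem:thetagraphtheory} gives $m(w)-\#E^=(w)+1\geq 0$ for the palindromic bispecial terms. Hence the left-hand side is a sum of nonnegative numbers.

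For the right-hand side I would argue that each term is nonpositive: if $w=\til w$ is not bispecial, then since $\Lu$ is closed under reversal $\#E^-(w)=\#E^+(w)=1$, so $E^=(w)\subseteq E^-(w)$ forces $\#E^=(w)\in\{0,1\}$ and thus $\#E^=(w)-1\leq 0$. Therefore the equality above forces every term on each side to vanish, which simultaneously yields $m(w)=0$ for every non-palindromic bispecial factor of length $n\geq K$ and $m(w)=\#E^=(w)-1$ for every palindromic bispecial factor of length $n\geq K$. The main obstacle is bookkeeping the non-bispecial palindromic contribution on the right, but the inequality $\#E^=(w)\leq 1$ in that case is what makes the sign argument work cleanly.
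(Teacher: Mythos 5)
Your proof is correct and follows essentially the same route as the paper: it combines \eqref{eq:FaCoBiMu}, Proposition \ref{prop:factvspalcomplexity}, the identity $\P(n+2)-\P(n)=\sum_{w=\til{w},\,|w|=n}(\#E^=(w)-1)$, and the connectedness statements of Corollary \ref{co:tree_and_multiplicities} with Lemmas \ref{lem:gammagraphtheory} and \ref{lem:thetagraphtheory}. The only cosmetic difference is that you conclude directly by forcing term-by-term vanishing of a nonnegative sum equal to a nonpositive one, whereas the paper phrases the same estimate as a proof by contradiction.
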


\begin{proof}
Let $K_1$ be the constant  given by  \Cref{co:tree_and_multiplicities}. If  $w$ is a bispecial factor with  $|w| >K_1$,
we conclude from \Cref{lem:is-connected} that the graph $\Gamma(w)$ is
connected. Also if $w$ is a palindrome, then $\Theta(w)$ is connected.
It follows from \Cref{lem:gammagraphtheory} that $m(w)\geq 0$.
If $w$ is a palindrome, \Cref{lem:thetagraphtheory} implies $m(w)\geq \#E^=(w)-1$.

If $w$ is not a bispecial factor, then $m(w) = 0$ and, moreover, if $w$ is not a bispecial factor and $w=\til{w}$, then by closedness under reversal we have $\#E^=(w) = 1$, and thus $m(w)=0= \#E^=(w)-1$.

Suppose by contradiction that for every integer $N$ there exists a non-palindromic factor $v$ of
length $|v|>N$ such that  $m(v)>0$ or
there exists a palindromic factor $v$ of
length $|v|>N$ such that  $m(v)>\#E^=(v)-1$.
As closedness under reversal implies recurrence, using \eqref{eq:FaCoBiMu} we obtain that for every integer $N$ there exists $n=|v|>N$ such that
\begin{equation} \label{eq:estim}
\Delta^2 \C(n)
  = \sum_{ \substack{w \in \Lu \\ |w| = n \\ w \neq \til{w}}} m(w) + \sum_{ \substack{w \in \Lu \\ |w| = n \\ w = \til{w}}} m(w)
  >  0 + \sum_{ \substack{w \in \Lu \\ |w| = n \\ w = \til{w}}} \left( \#E^=(w)-1 \right) = \P(n+2) - \P(n).
\end{equation}
This contradicts \Cref{prop:factvspalcomplexity} and ends the proof of the theorem with $K = \max\{K_1,M\}$.
\end{proof}

The following result is a direct consequence of
\Cref{lem:gammagraphtheory} and \Cref{lem:thetagraphtheory}.
It allows to interpret the previous theorem in terms of graph theory.

\begin{corollary} \label{cor:wednesday}
Let $\bu\in\A^\N$ be an infinite word with its language closed under reversal and $D(\bu) < + \infty$.
There exists a positive integer $K$ such that
for every $w \in \Lu$ of length at least $K$
\begin{itemize}
\item if $w$ is not a palindrome, then the graph $\Gamma(w)$ is a tree,
\item if $w$ is a palindrome, then the graph $\Theta(w)$ is a tree.
\end{itemize}
\end{corollary}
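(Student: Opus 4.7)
The plan is to combine Theorem \ref{th:finite_defect_bs_multiplicities} (which pins down the bilateral multiplicity of every long bispecial factor) with Corollary \ref{co:tree_and_multiplicities} (which gives connectedness of the extension graphs for long bispecial factors) and then apply the graph-theoretic Lemmas \ref{lem:gammagraphtheory} and \ref{lem:thetagraphtheory} that translate multiplicity into the tree/cycle dichotomy.

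Concretely, I would pick $K$ large enough that both Theorem \ref{th:finite_defect_bs_multiplicities} and Corollary \ref{co:tree_and_multiplicities} apply to every factor $w\in\Lu$ with $|w|\geq K$. For such a $w$ I would split into two cases. First, if $w$ is bispecial and $w\neq\til w$, then $\Gamma(w)$ is connected by Corollary \ref{co:tree_and_multiplicities} and $m(w)=0$ by Theorem \ref{th:finite_defect_bs_multiplicities}; by Lemma \ref{lem:gammagraphtheory} a connected graph with $m(w)=0$ is a tree. Second, if $w$ is bispecial and $w=\til w$, then $\Theta(w)$ is connected by Corollary \ref{co:tree_and_multiplicities} and $m(w)=\#E^=(w)-1$ by Theorem \ref{th:finite_defect_bs_multiplicities}; Lemma \ref{lem:thetagraphtheory} then identifies $\Theta(w)$ as a tree.

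The remaining case is when $w$ is not bispecial, which is not covered by the cited results. Here the graphs are trivial and I would dispatch them directly: if $w$ is not left special or not right special then one side of the bipartition of $\Gamma(w)$ is a singleton, so $\Gamma(w)$ is a star and in particular a tree; if moreover $w$ is a palindrome, then closure under reversal forces $E^-(w)=E^+(w)$ to be a singleton as well, so $\Theta(w)$ is a single vertex with no edge, which is a (degenerate) tree.

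There is essentially no obstacle here: the corollary is just a repackaging of the theorem into graph-theoretic language, and all the work is already done in Theorem \ref{th:finite_defect_bs_multiplicities} and the two graph lemmas. The only small point requiring attention is to verify the non-bispecial case explicitly, so that the statement genuinely holds for \emph{every} long factor rather than only for the bispecial ones appearing in the theorem.
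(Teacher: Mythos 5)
Your proof is correct and follows essentially the same route as the paper: choose $K$ large enough that both \Cref{co:tree_and_multiplicities} and \Cref{th:finite_defect_bs_multiplicities} apply, then invoke \Cref{lem:gammagraphtheory} and \Cref{lem:thetagraphtheory} to convert the multiplicity values into the tree conclusion. Your explicit handling of non-bispecial factors (star graph, respectively single vertex) is a small point the paper's proof leaves implicit, and you dispatch it correctly.
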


\begin{proof}
Let $\bu\in\A^\N$ have its language closed under reversal.
From \Cref{co:tree_and_multiplicities}, there exists an integer $K_1$ such that for each bispecial factor $w \in \Lu$ with $|w| \geq K_1$ the graph $\Gamma(w)$ is  connected. If $w$ is moreover a palindrome,  then also the graph  $\Theta(w)$ is connected.

From \Cref{th:finite_defect_bs_multiplicities},
it follows that there exists a constant $K_2$ such that every factor $w$ longer than $K_2$ satisfies
\[
m(w) = \begin{cases} 0 & \text{ if } w \neq \til{w}, \\ \# E^=(w)-1 & \text{ if } w = \til{w}. \end{cases}
\]
Let $K = \max \{ K_1, K_2 \}$ and $w$ be a factor of $\Lu$ such that $|w|>K$.
If $w\neq\til{w}$, \Cref{lem:gammagraphtheory} implies that $\Gamma(w)$ is a
tree. If $w=\til{w}$, \Cref{lem:thetagraphtheory} implies that $\Theta(w)$ is
a tree.
\end{proof}

\end{longversion}

\section{Proof of Zero Defect Conjecture for binary alphabet} \label{sec:binary}

The binary alphabet offers less variability for the construction of a strange phenomenon.
The recent counterexamples to two conjectures  concerning palindromes in fixed points of primitive morphisms ---  namely
the Bucci-Vaslet counterexample to the Zero Defect Conjecture and the Labb\'e counterexample to the Hof-Knill-Simon (HKS) conjecture --- use ternary alphabet.
That conjecture \cite{HoKnSi} asks whether all palindromic fixed points of primitive substitutions are fixed by some conjugate of a morphism of the form $\alpha\mapsto p_\alpha p$ where $p_\alpha$ and $p$ are palindromes.
On a binary alphabet, Tan demonstrated the validity of the HKS conjecture, see \cite{BoTan}.
Here we prove the Zero Defect Conjecture on a binary alphabet.

\begin{lemma}\label{binarQ}
Let $\A =\{0,1\}$ and $\bu \in \A^{\mathbb{N}}$. If  $\Lu$ is closed under reversal and $D(\bu) >0$, then there exists a non-palindromic factor $q \in \Lu$ such that
$0q0, 0q1, 1q0, 1q1 \in \Lu$.

\end{lemma}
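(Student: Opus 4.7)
The plan is to start from the characterization of zero defect in terms of complete mirror returns (\Cref{prop:rich}): since $D(\bu)>0$, there exists a factor $w\in\Lu$ admitting a complete mirror return $c$ that is not a palindrome. Closure of $\Lu$ under reversal then yields a second such return $\tilde c\in\Lu$; the defining properties of a complete mirror return are invariant under exchanging $w$ with $\tilde w$, so $\tilde c$ is also a complete mirror return to $w$, and $\tilde c\neq c$ because $c$ is non-palindromic.

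The next step is to compare $c$ and $\tilde c$ through their longest common prefix and longest common suffix. Let $v$ denote the longest common prefix; since the two words have the same length but are distinct, they diverge at position $|v|$. After possibly swapping the roles of $c$ and $\tilde c$, we may assume $c[|v|]=0$ and $\tilde c[|v|]=1$, so that $v0,v1\in\Lu$. Because reversal swaps prefixes with suffixes, the longest common suffix of $c$ and $\tilde c$ equals $\tilde v$, which gives $0\tilde v,1\tilde v\in\Lu$. Consequently we obtain the explicit decomposition $c=v\cdot 0\cdot y\cdot 1\cdot\tilde v$ and $\tilde c=v\cdot 1\cdot\tilde y\cdot 0\cdot\tilde v$ for some (possibly empty) word $y$.

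The candidate factor $q$ should be located inside $v$ (or symmetrically inside $\tilde v$). A natural attempt is to take the longest palindromic suffix $\pi$ of $v$: since $v0,v1\in\Lu$, one obtains $\pi 0,\pi 1\in\Lu$, and closure under reversal gives $0\pi,1\pi\in\Lu$, so $\pi$ is a palindromic bispecial factor. Write $v=u\cdot\pi$ and let $\gamma$ be the last letter of $u$ (assuming $u$ is non-empty). Then the suffix $\gamma\pi$ of $v$ is non-palindromic, by maximality of $\pi$, and right-special because $\gamma\pi\cdot 0$ and $\gamma\pi\cdot 1$ are suffixes of $v0$ and $v1$ respectively. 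This $\gamma\pi$ (or, if $v=\pi$ is itself a palindrome so that $u$ is empty, a parallel construction based on $\tilde v$ or on the inner word $y$) is the intended candidate for $q$.

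The hard step is the final verification: showing that the chosen $q$ is left-special and that all four extensions $0q0,0q1,1q0,1q1$ belong to $\Lu$. My plan is to read off the left extensions of $q$ from the right extensions of $\tilde q$ via closure of $\Lu$ under reversal, and to invoke a minimality assumption on the pair $(w,c)$---for instance, choosing $c$ of shortest length among non-palindromic complete mirror returns---to force the four extensions of $q$ to exhaust $\{0,1\}^{2}$. The proof then concludes by combining this minimality with the strict constraints on the possible shapes of $\Gamma(q)$ on a binary alphabet recorded in \Cref{lem:gammagraphtheory}: any missing extension would either yield a shorter non-palindromic complete mirror return, contradicting minimality, or be incompatible with the extension structure imposed on $v$ and $\tilde v$ by $c$ and $\tilde c$.
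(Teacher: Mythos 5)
Your setup is correct and, in fact, equivalent to the paper's: invoking \Cref{prop:rich} to get a non-palindromic complete mirror return $c$, and taking the longest common prefix $v$ of $c$ and $\til{c}$, amounts exactly to the paper's choice of the longest $w$ such that $c$ is a complete mirror return to $w$, with your divergence letters playing the role of the paper's $a\neq b$; the decomposition $c=v\,0\,y\,1\,\til{v}$ is fine. The genuine gap is the final step, which you yourself label a ``plan'': the entire content of the lemma is that some non-palindromic $q$ admits \emph{all four} bilateral extensions $0q0,0q1,1q0,1q1$, and for your candidate $q=\gamma\pi$ you only establish non-palindromicity and right-specialness ($q0,q1\in\Lu$). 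Closure under reversal does not convert this into left-specialness of $q$: reversing $q0,q1$ yields $0\til{q},1\til{q}\in\Lu$, i.e., information about $\til{q}$, not about the left extensions of $q$. And even left- plus right-specialness would not suffice, since a binary bispecial factor can have $\#E(q)=2$ or $3$; you need $E(q)=\{0,1\}^2$, i.e., $m(q)=1$ (a cycle in $\Gamma(q)$), which is precisely what the later proof of \Cref{th:ZDCbinary} consumes. Your appeal to minimality comes too late to help: $c$ was not chosen of minimal length at the outset, so none of the structural consequences of minimality are available for your candidate, and the claim that ``any missing extension would yield a shorter non-palindromic complete mirror return'' is asserted, not proved. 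Finally, when $v$ is itself a palindrome your candidate $\gamma\pi$ does not exist, and the fallback (``based on $\til{v}$ or on the inner word $y$'') is unspecified --- note that the symmetric construction on $\til{v}=v$ gives nothing new in that case.

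For comparison, the paper fixes $c$ of minimal length and $w$ maximal, writes $w=0q$ (so its $q$ is $w$ with the first letter stripped, a different candidate from yours), and uses minimality to prove four occurrence-level facts inside the interior $u$: $q$ is not a palindrome; $0q$ and $\til{q}0$ do not occur in $u$; occurrences of $q$ and $\til{q}$ alternate in $u$; and every mirror return to $q$ inside $u$ is a palindrome. The four extensions are then read off at four distinct occurrences (the prefix $0qa$, the leftmost $\til{q}$, the rightmost $q$, the suffix $b\til{q}0$) together with closure under reversal and $a\neq b$. Without analogous occurrence-level information for $\gamma\pi$, the verification you sketch cannot be completed, so the proposal as it stands does not prove the lemma.
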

\begin{proof}

By \Cref{prop:rich}, as $D(\bu) >0$, there exist factors $v$  and $w$ in $\Lu$ such that $v$ is a complete mirror return word  to $w$  and $v$  is not a palindrome.
Let us consider the shortest   $v$  with this property.
For this  fixed $v$  we find the longest $w$ such that $v$ is a complete mirror return word to  $w$.
It means that $v$ has a prefix $wa$  and a suffix $b\widetilde{w}$ where $a,b \in \A$ and $a\neq b$.
Since on a binary alphabet every complete mirror return word to a letter is always a palindrome, we have $|w| >1$.
Without loss of generality  we can write  $w=0q$ with $q\neq \varepsilon$.
Consequently $v=0u0$.
Clearly $u$ has a prefix $q$, the word $u$ has  a suffix $\widetilde{q}$ and $u$ is not a palindrome.
Our  choice  of $v$  (to be  the shortest non-palindromic mirror return to a factor) implies   that $u$ is not a complete mirror return  word to $q$ and thus $q$ or $\widetilde{q}$ has another occurrence inside $u$.
Since $v$ is a complete mirror return word to $w=0q$,
\begin{equation}\label{help}
\text{  $0q$ and $\widetilde{q}0$ do not occur in $u$. }
\end{equation}
Let us suppose that $q =\widetilde{q}$.
Consider the shortest prefix of $u$ which has exactly two occurrences of $q$.
It is palindrome.
Since $v$ has a prefix $wa = 0qa$ the second occurrence of $q$ is extended to the left as $aq$.
Analogously,   consider  the shortest suffix of $u$ which contains exactly two occurrences of $q$.
It is a palindrome and thus the penultimate occurrence of $q$  is extended to the right as $qb$.
This contradicts   \eqref{help} as $a\neq b$.
We conclude that $q$ is not a  palindrome.

Now we show that  occurrences of $q$ and  $\widetilde{q}$ in $u$ alternate.
Assume that there exists a factor of $u$, denoted by $u'$, such that $q$  is a prefix and a suffix of $u'$ and $u'$ does not contain $\widetilde{q}$.
It follows that the longest palindromic suffix of $u'$ is not unioccurrent in $u'$.
Therefore  $D(u') \geq 1$ (see \Cref{DefDefect}),  which  contradicts the minimality of $|v|$.

The minimality of $|v|$ implies that all mirror return words to $q$ in $u$ are palindromes.
Therefore, the leftmost  occurrence of $\widetilde{q}$   in $u$   is extended to the left  as $a\widetilde{q}$     and the rightmost  occurrence of $q$ in   $u$   is extended to the  right   as $qb$.
From  \eqref{help}  we deduce that $0qa$, $a\widetilde{q}1$, $1qb$, and $b\widetilde{q}0$ belong to $\Lu$.
The assumption that $\Lu$ is closed under reversal and the fact that $a\neq b$ finish the proof.
\end{proof}

\begin{theorem} \label{th:ZDCbinary} Let  $\bu\in\A^\N$ be a fixed point of a
    primitive morphism $\varphi$   over a binary alphabet $\A$. If $D(\bu) < +\infty$,  then $D(\bu) =0$ or $\bu$ is periodic.
\end{theorem}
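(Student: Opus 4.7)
Suppose for contradiction that $\bu$ is aperiodic and $0 < D(\bu) < +\infty$. First I collect standard consequences of these hypotheses. Finiteness of $D(\bu)$ forces $\Lu$ to contain infinitely many palindromes, so $\bu$ is palindromic; being a fixed point of a primitive morphism, $\bu$ is uniformly recurrent, and uniform recurrence together with palindromicity yields that $\Lu$ is closed under reversal. Moreover, aperiodicity rules out $\varphi$ being cyclic, so $\varphi$ is acyclic and admits a leftmost conjugate $\varphi_L$ and a rightmost conjugate $\varphi_R$.

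On a binary alphabet, $\Fst(\varphi_L)$ is non-constant, hence a bijection on $\{0,1\}$, and analogously for $\Lst(\varphi_R)$, so $\varphi$ is automatically marked. Passing to a suitable power $\varphi^k$ (which still fixes $\bu$ and generates the same language), the remark following \Cref{def:marked} lets me assume that $\varphi$ is well-marked with $\Fst(\varphi_L) = \Lst(\varphi_R) = \Id$. This places me in the setting of \Cref{th:conjugate_palindromes} and \Cref{lem:Phi5properties}; let $\Phi(u) = \varphi_R(u)w = w\varphi_L(u)$ be the associated map on $\Lu$.

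By \Cref{binarQ}, there is a non-palindromic factor $q \in \Lu$ with $\{0,1\}^2 \subseteq E(q)$. Iterating properties (III) and (IV) of \Cref{lem:Phi5properties}, each $\Phi^n(q)$ is a non-palindromic factor of $\bu$ satisfying $a\Phi^n(q)b \in \Lu$ for every $a,b\in\{0,1\}$, i.e.\ $E(\Phi^n(q)) = \{0,1\}^2$. Consequently the extension graph $\Gamma(\Phi^n(q))$ has four vertices and four edges; it is the complete bipartite graph $K_{2,2}$, which contains a $4$-cycle. Since $\varphi_L$ is primitive, $|\Phi^n(q)|$ tends to infinity with $n$.

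Choose $n$ large enough that $|\Phi^n(q)|$ exceeds the threshold $K$ from \Cref{cor:wednesday}. Because $\Phi^n(q)$ is not a palindrome, the corollary forces $\Gamma(\Phi^n(q))$ to be a tree, contradicting the cycle exhibited above. The main obstacle is the bookkeeping around marked morphisms: one must verify that the binary alphabet automatically yields markedness, that replacing $\varphi$ by a power to secure well-markedness with identity first/last maps preserves everything needed, and that \Cref{lem:Phi5properties} legitimately propagates both non-palindromicity and the full extension set through every iterate of $\Phi$. Once this setup is in place, the clash between the ``obstruction factor'' produced by \Cref{binarQ} and the tree structure imposed by \Cref{cor:wednesday} is immediate.
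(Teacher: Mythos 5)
Your proposal is correct and follows essentially the same route as the paper: use \Cref{binarQ} to produce a non-palindromic factor with full extension set, note that binary plus acyclic gives a (well-)marked morphism, iterate $\Phi$ via \Cref{lem:Phi5properties} (III)--(IV) to get arbitrarily long such factors, and contradict the finite-defect constraint on long factors. The only cosmetic difference is that you phrase the final contradiction through \Cref{cor:wednesday} (a cycle in $\Gamma(\Phi^n(q))$ versus a tree), whereas the paper invokes \Cref{th:finite_defect_bs_multiplicities} directly via the bilateral multiplicity $m(\Phi^n(q))=1$; these are equivalent formulations of the same obstruction.
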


\begin{proof}
Assume  the contrary, i.e.,  $\bu$ is not periodic  and   $D(\bu)>0$ and let $\A= \{0,1\}$.

Since $D(\bu)$ is finite,  $\bu$  is palindromic.
As $\varphi$ is primitive, $\Lu$ is uniformly recurrent.
Any uniformly recurrent word which is palindromic has its language closed under reversal.
Due to \Cref{binarQ} there exists a strong bispecial non-palindromic factor $q$ with $m(q) =1$.

Since      $\bu$ is not periodic,  the morphism $\varphi$ is acyclic.
On the binary alphabet,  it means  that $\varphi$ is well-marked.
Applying  repeatedly \Cref{lem:Phi5properties} (IV) and (V), we can construct an
infinite sequence  of strong bispecial factors $q,  \Phi(q), \Phi^{2}(q), \Phi^{3}(q), \ldots $,  each with bilateral multiplicity $1$.
By \Cref{lem:Phi5properties} (III), all these bispecial factors are non-palindromic.
This contradicts \Cref{th:finite_defect_bs_multiplicities}.
\end{proof}

\end{longversion}

\begin{longversion}
\section{Proof of Zero Defect Conjecture for marked morphisms} \label{sec:multimarked}

At first we have to stress that unlike the binary version,  the statement of Theorem \ref{th:main}  does not speak about periodic fixed points.
The following result from \cite{LaPe14} allows to
deduce that on  a larger  alphabet  there is no ultimately periodic infinite word $\bu$ fixed point of a
primitive marked morphism such that $0<D(\bu)<\infty$.

\begin{proposition}{\rm\cite[Cor. 30, Cor. 32]{LaPe14}}\label{cor:onlyBinaryAlphabet}
Let $\bu$ be an eventually periodic fixed point of a primitive marked morphism $\varphi$
over an alphabet $\mathcal{A}$. If $\bu$ is palindromic, then
$\mathcal{A}=\{0,1\}$ is a binary alphabet and $\bu$ equals $(01)^\omega$ or
$(10)^\omega$.
\end{proposition}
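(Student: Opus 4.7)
The plan is to combine the structural results on marked morphisms with the rigidity imposed by periodicity of $\bu$. First, since $\varphi$ is primitive, its fixed point $\bu$ is uniformly recurrent; an eventually periodic uniformly recurrent word is purely periodic, hence $\bu=v^\omega$ for some primitive word $v$ (meaning $v$ is not a proper power). Replacing $\varphi$ by a suitable power preserves its fixed points and every hypothesis, so I may further assume that $\varphi$ is well-marked with $\Fst(\varphi_L)=\Lst(\varphi_R)=\Id$; this makes the apparatus of \Cref{th:conjugate_palindromes} and \Cref{lem:Phi5properties} available.

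The core step is to show that $\bu$ admits no non-empty bispecial factor. Suppose to the contrary that $u\in\Lu$ is non-empty and bispecial. By \Cref{lem:Phi5properties}(V), each $\Phi^n(u)$ is bispecial; because $\varphi_R$ is primitive and $u$ is non-empty, the lengths $|\varphi_R^n(u)|$ and hence $|\Phi^n(u)|$ tend to infinity; and $\Phi$ is injective since the marked morphism $\varphi_R$ is injective. Hence the words $\Phi^n(u)$ form infinitely many pairwise distinct bispecial factors of unbounded length. This contradicts the fact that the purely periodic word $v^\omega$ has bounded factor complexity, and therefore only finitely many bispecial factors.

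Consequently every non-empty factor of $\bu$ is either not right-special or not left-special. A short combinatorial argument using primitivity of $v$ then shows that each letter of $\A$ occurs exactly once in $v$: if some letter $a$ appeared at two distinct positions $i\neq j$ of $v$, the maximal common right and left extensions of these two occurrences both have length strictly less than $|v|$ (else $v$ would have a period dividing $|j-i|$), and the corresponding factor centred at $a$ would witness a non-empty bispecial factor, contradicting the previous paragraph. Thus $|v|=|\A|$ and $v$ is a permutation of $\A$. Palindromicity of $\bu$ together with its uniform recurrence implies that $\Lu$ is closed under reversal, so $\til v$ must be a cyclic shift of $v$ (the factors of length $|v|$ in $\bu$ are exactly the cyclic rotations of $v$). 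For a cyclic arrangement of $k\geq 3$ pairwise distinct letters, reversing the order is never a rotation; hence $|\A|=2$, the only two permutations are $01$ and $10$, and $\bu=(01)^\omega$ or $\bu=(10)^\omega$.

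The main obstacle I anticipate is verifying that $|\Phi^n(u)|\to\infty$ in full generality: one must exclude the degenerate possibility that $\varphi_R$ acts as a letter-to-letter map on the letters appearing in $u$ (which would contradict primitivity of $\varphi$ on an infinite fixed point), and handle the case where the conjugate word $w$ is empty, so that $\Phi=\varphi_R$ and growth follows directly from primitivity of $\varphi_R$. A secondary delicate point is the combinatorial lemma producing a bispecial factor from a repeated letter of a primitive $v$, which requires simultaneously extracting distinct left and right extensions of a common core around the two chosen occurrences of the repeated letter.
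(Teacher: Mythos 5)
The paper itself does not prove this proposition: it is imported verbatim from \cite[Cor.~30, Cor.~32]{LaPe14}. Your argument is therefore necessarily a different route --- a self-contained proof assembled from machinery the paper does develop. Your strategy (pure periodicity from uniform recurrence; no non-empty bispecial factor, because iterating $\Phi$ via \Cref{lem:Phi5properties}(IV)--(V) would produce arbitrarily long bispecial factors while a periodic word has only finitely many; hence every letter occurs exactly once in the period $v$; hence, by closure under reversal, $\til{v}$ is a cyclic rotation of $v$, which for pairwise distinct letters forces $\#\A\le 2$) is sound. It only invokes \Cref{th:conjugate_palindromes} and \Cref{lem:Phi5properties}, i.e.\ exactly the statements the paper already quotes from \cite{LaPe14}, rather than the finer Corollaries 30 and 32 of that paper. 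What the citation buys the paper is brevity; what your proof buys is independence from those corollaries and a transparent reason why only $(01)^\omega$ and $(10)^\omega$ survive.

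Three small points should be tightened. First, your last step only excludes $\#\A\ge 3$; the case $\#\A=1$ (i.e.\ $|v|=1$) is not ruled out by the rotation argument and must be discarded separately: a marked morphism is by definition acyclic (\Cref{def:marked}), whereas every morphism of a unary alphabet is cyclic, so $\#\A\ge 2$ from the start. Second, the growth $|\Phi^n(u)|\to\infty$ for $u\neq\varepsilon$ that you flag as an obstacle is immediate once $\#\A\ge 2$: $\varphi_R$ is a conjugate of a primitive morphism, hence primitive, so $|\varphi_R^n(a)|\to\infty$ for every letter $a$, and $|\Phi^n(u)|\ge|\varphi_R^n(u)|$; no separate discussion of letter-to-letter behaviour is needed. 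Third, in the argument extracting a non-empty bispecial factor from a repeated letter of $v$, work in the two-sided periodic word (or at occurrences far from the origin) so that the maximal common left extension is well defined; its finiteness indeed follows from primitivity of $v$, the bound ``strictly less than $|v|$'' being stronger than what you need.
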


Due to the previous proposition, a fixed point of a marked morphisms on binary alphabet
 is  either not eventually periodic or equal to $(01)^\omega$  or $(10)^\omega$.
Since both words $(01)^\omega$ and $(10)^\omega$ have defect zero and the Zero
Defect Conjecture for binary alphabet is proven by \Cref{th:ZDCbinary},  we
may restrict ourselves to  alphabets with cardinality at least three.

First, we prove a multiliteral analogue of Lemma \ref{binarQ}  for words with its language closed under reversal and with positive
palindromic defect.

\begin{theorem} \label{th:tuesday}
Let $\bu\in\A^\N$ have its language closed under reversal. If $D(\bu) > 0$, then
either
\begin{enumerate}
\item there exists a non-palindrome $q\in \Lu$ such that
    $\Gamma(q)$ contains a cycle or
\item there exists a palindrome $q\in \Lu$ such that
  $\Theta(q)$ contains a cycle.
\end{enumerate}
Moreover, if the empty word is the unique factor $q$ with the above property, then there exists a letter with a non-palindromic complete return word.
\end{theorem}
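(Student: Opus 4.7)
My plan is to adapt the argument of \Cref{binarQ} to the multiliteral setting. By \Cref{prop:rich}, since $D(\bu)>0$, there exist factors $v,w\in\Lu$ with $v$ a complete mirror return word to $w$ that is not a palindrome. I would choose $v$ of minimal length with this property and, for this $v$, a corresponding $w$ of maximal length. Up to exchanging the roles of $w$ and $\widetilde{w}$, $v$ has a prefix $wa$ and a suffix $b\widetilde{w}$, and the maximality of $|w|$ forces $a\neq b$ (otherwise $v$ would also be a complete mirror return to $wa$). Writing $\alpha$ for the first letter of $w$ and $w=\alpha q$, and letting $u$ denote the inner factor of $v$ (so $v=\alpha u\alpha$), two structural facts will drive the argument: (i) neither $\alpha q$ nor $\widetilde{q}\alpha$ occurs in $u$ (the multiliteral analogue of \eqref{help}), and (ii) every complete mirror return to $q$ inside $u$ is a palindrome (by minimality of $|v|$).

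I would then distinguish two cases according to $|w|$. If $|w|=1$, so that $q=\varepsilon$ and $v$ is a non-palindromic complete return word to the letter $\alpha$ (with $u$ avoiding $\alpha$), the sequence of consecutive letters of $u$ produces a walk in $\Theta(\varepsilon)$ from $a$ to $b$ avoiding the vertex $\alpha$, and this walk closes to a cycle once joined with the two edges $\{\alpha,a\}$ and $\{\alpha,b\}$ (the latter obtained from the suffix $b\alpha$ of $v$ by reversal closure). If $|w|\geq 2$, so that $q$ is nonempty, I would first establish that occurrences of $q$ and $\widetilde{q}$ alternate in $u$ whenever $q\neq\widetilde{q}$ (by the argument of \Cref{binarQ}, using minimality of $|v|$ and \Cref{DefDefect}). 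Combined with (ii), this yields a chain of letters $a=r_1,\widetilde{r}_1,r_2,\widetilde{r}_2,\ldots,r_k=b$, where each letter differs from $\alpha$ thanks to (i). When $q$ is not a palindrome, this chain encodes a walk in $\Gamma(q)$ from $(a,+1)$ to $(b,+1)$ avoiding $(\alpha,-1)$; attaching the edges $\{(\alpha,-1),(a,+1)\}$ (from $\alpha q a=wa$) and $\{(\alpha,-1),(b,+1)\}$ (from $\alpha q b$, by reversal from $b\widetilde{w}$) closes a cycle in $\Gamma(q)$. When $q$ is a palindrome, the analogous chain lies in $\Theta(q)$ and produces a cycle there.

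The moreover statement follows immediately from this split: the $|w|\geq 2$ branch always produces a \emph{nonempty} factor $q$ satisfying one of (1), (2). Therefore, if $\varepsilon$ is the unique factor with the cycle property, only the $|w|=1$ case can occur, and $v$ is then a non-palindromic complete return word to a single letter, giving the desired conclusion. The principal obstacle lies in the $|w|\geq 2$, non-palindromic $q$ subcase: one must verify that the closed walk built in $\Gamma(q)$ genuinely contains a cycle rather than collapsing entirely through back-and-forths. This reduces to the observation that $\widetilde{r}_j\neq\alpha$ for all interior occurrences (a consequence of (i)) and $a\neq b$, which together ensure that the two edges incident to $(\alpha,-1)$ cannot be cancelled against the remainder of the walk.
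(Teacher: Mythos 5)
Your proposal is correct and follows essentially the same route as the paper's proof: a minimal non-palindromic complete mirror return $v$ with a maximal $w$, the same three-way split on $q$ (empty, palindromic, non-palindromic), walks in $\Theta(q)$ or $\Gamma(q)$ built from the alternating occurrences of $q$ and $\til{q}$ whose mirror returns are palindromes, closed into a cycle via the two distinct edges at the first letter of $w$, with the interior avoiding that vertex. The ``moreover'' clause is also handled exactly as in the paper, through the $q=\emptyword$ case yielding a non-palindromic complete return word to a letter.
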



\begin{proof}
Since $D(\bu) > 0$,  there exists a word $v=v_0v_1 \cdots v_n$ such that $w$ is a prefix of
$v$, $\widetilde{w}$ is a suffix of $v$, $v$ does not contain other
occurrences of $w$ or $\widetilde{w}$, $v$ is not a palindrome and $|w|\geq1$.
Suppose that $v$ is a word of minimal length with this property and suppose that
$w$ is the longest prefix of $v$ such that $\widetilde{w}$ is a suffix of $v$. Then there exist letters  $\alpha\neq \beta$ such that $w\alpha$ is a prefix and $\beta \widetilde{w}$ is a suffix of $v$.    Let us define  $t\in \A$ and $q\in \A^*$  to satisfy
$w=tq$ \  (see Figure~\ref{fig:factor_v}).

\begin{figure}[h]
\begin{center}
\begin{tikzpicture}[scale=0.7]
\draw (0,4) rectangle (10,5) node [midway]{$v$};
\draw (0,3) rectangle (3,4)  node [midway]{$w$};
\draw (3,3) rectangle (4,4)  node [midway]{$\alpha$};
\draw (6,3) rectangle (7,4)  node [midway]{$\beta$};
\draw (7,3) rectangle (10,4) node [midway]{$\widetilde{w}$};
\draw (0,2) rectangle (1,3)  node [midway]{$t$};
\draw (1,2) rectangle (3,3)  node [midway]{$q$};
\draw (7,2) rectangle (9,3)  node [midway]{$\widetilde{q}$};
\draw (9,2) rectangle (10,3)  node [midway]{$t$};
\end{tikzpicture}
\end{center}
\caption{The complete mirror return word $v$ to the factor $w$.}
\label{fig:factor_v}
\end{figure}
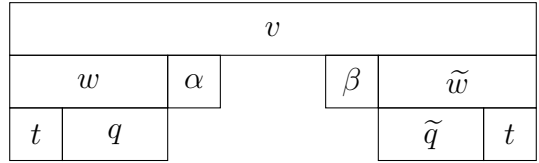

We discuss three cases:

\begin{enumerate}

\item

Let us suppose  $q =\widetilde{q} \neq \varepsilon$.    Due to  the minimality of $v=v_0v_1 \ldots v_n = tq\alpha \cdots \beta qt$, the non-palindromic factor $v_1v_2\ldots v_{n-1} = q\alpha \cdots \beta q$ cannot be a complete return word to $q$ and thus contains at least 3 occurrences of $q$.    Let $k$ be the number of occurrences $q$ in $v$. For $ i=1,2,\ldots, k$,  denote by $ \gamma_i$  the letter which precedes  the  $i^{th}$  occurrence of $q$  and by $\delta_i$ the letter which succeeds the  $i^{th}$  occurrence of $q$.

$\bullet$ Obviously,  $\gamma_1 = t$, $\delta_1 =\alpha$,  and $\gamma_k =\beta$ and  $\delta_k =t$.

$\bullet$ Since $v$ is a complete mirror return word to the factor  $w=tq$, necessarily  $t\neq \gamma_i$  for $i=2,\ldots, k$ and $t\neq \delta_i$  for $i=1,\ldots, k-1$. In particular, $\alpha\neq t$ and $\beta\neq t$.

$\bullet$
  Since each complete return word to $q$ in $v$ is  a palindrome, $\delta_i=\gamma_{i +1}$ for $i = 1, 2,\ldots, k-1$.
We artificially put  $\gamma_{k+1}=\delta_k =t$.

\medskip

According to the definition of $\Theta(q)$,  if $\gamma_i\neq \gamma_{i+1}= \delta_i$, then    the pair $\{\gamma_{i}, \gamma_{i+i}\} $ forms an edge. We want to find a cycle in $\Theta(q)$.
For this purpose we modify  the sequence of letters  $\gamma_1, \gamma_2, \ldots, \gamma_k, \gamma_{k+1}$ as follows:
 If   $\gamma_{j+1}=\gamma_j$ for some index $j=1, \ldots, k$, then  we erase from  the sequence  the ${(j+1)}^{th}$ entry $\gamma_{j+1}$. Then the  modified sequence is a path in $\Theta(q)$ which starts and ends at $t$. The second vertex on the path is $\alpha$, the penultimate vertex is $\beta$. As $\alpha\neq\beta$, the graph $\Theta(q)$ contains a cycle.

\medskip

\item Let us suppose that  $q=\varepsilon$. Now  $v=v_0v_1 \ldots v_n  = t\alpha v_2v_3\cdots \beta t$.
It means that $v$ is a complete return to the letter $t$ which is non-palindromic.
If $v_i\neq v_{i+1}$, the  pair of consecutive letters  $\{v_i,v_{i+1}\}$ is an  edge in  the graph $\Theta(\varepsilon)$  connecting  vertices $v_i$ and $v_{i+1}$. If we erase from the sequence $v_0,v_1,  \ldots ,v_n$  each vertex $v_{j+1}$ which coincides with its predecessor $v_{j}$, we get   a path starting and ending  in  the vertex $t$. The first edge on this path is   $\{t,\alpha\}$, the last one is  $\{t,\beta\}$.   As $\alpha\neq \beta$, the graph  $\Theta(\varepsilon)$ contains a cycle.

\medskip

\item Now we assume that $q \neq \widetilde{q} $.  Note that occurrences of $q$ and $\widetilde{q}$ alternate inside $v$. Indeed,
suppose the contrary, that is there exists a complete return word $z$ of $q$
that has no occurrences of $\widetilde{q}$ and $z$ is a factor of $v$. The
longest palindrome suffix of $z$ must be shorter than $q$. Therefore the longest
palindromic suffix of $z$ is not unioccurrent in $z$. This contradicts the
minimality of $v$. Note also that $v$ must contain other occurrences of $q$ or
$\widetilde{q}$ inside or otherwise we get a contradiction on minimality of $v$. Let us denote $k$ the number of occurrences of $q$ in $v$. Clearly $k$ equals to the number of occurrences of $\widetilde{q} $ as well.

Again we denote by $\gamma_{i}$ the letter which precedes  the  $i^{th}$  occurrence of $q$  and by $\delta_i$ the letter which succeeds the  $i^{th}$  occurrence of $q$.  In particular, $\gamma_1 = t$ and $\delta_1 =\alpha$.
Analogously, we denote by $\tilde{\gamma_{i}}$ the letter which precedes  the  $i^{th}$  occurrence of $\widetilde{q}$  and by $\tilde{\delta_i}$ the letter which succeeds the  $i^{th}$  occurrence of $\widetilde{q}$.  In particular,  $\tilde{\gamma_{k}}=\beta$ and $\tilde{\delta_{k}}=t$. Point out three important facts:

$\bullet$  $\gamma_iq\delta_i \in \Lu$ implies $\{(\gamma_i, -1), (\delta_i, +1)\}$ is an edge in $\Gamma(q)$
for $i=1,2,\ldots, k$.

$\bullet$  As the language $\Lu$ is closed  under reversal,  $\tilde{\gamma_i}\widetilde{q}\tilde{\delta_i} \in \Lu$   implies $\{(\tilde{\delta}_i, -1), (\tilde{\gamma}_i, +1)\}$ is an edge in $\Gamma(q)$
for $i=1,2,\ldots, k$.

$\bullet$   Due to minimality of $v$,  any mirror return to $q$ in $v$ is a palindrome.  Thus $\delta_i=\tilde{\gamma_i}$  for $i =1,2,\ldots,k$ and  $\tilde{\delta}_i=\gamma_{i+1}$ for $i=1,2,\ldots, k-1$.

\medskip

Therefore, $\{(\gamma_i, -1), (\tilde{\gamma}_i, +1)\}$ is an edge in $\Gamma(q)$
for $i=1,2,\ldots, k$,
$\{(\tilde{\gamma}_i, +1), (\gamma_{i+1}, -1)\}$ is an edge in $\Gamma(q)$
for $i=1,2,\ldots, k-1$ and
$\{(\tilde{\gamma}_k, +1), (\tilde{\delta}_k, -1)\}$ is an edge in $\Gamma(q)$.
We can summarize that the sequence of vertices

\centerline{$(\gamma_1, -1), (\tilde{\gamma_1}, +1), (\gamma_2, -1),
(\tilde{\gamma_2}, +1), \ldots,(\gamma_k, -1), (\tilde{\gamma_k},
+1),(\tilde{\delta}_{k}, -1)$}

forms a path in the bipartite graph $\Gamma(q)$ with
$\gamma_1=\tilde{\delta_k}=t$ and
$\tilde{\gamma_1}=\alpha\neq\beta=\tilde{\gamma_k}=t$. In this path the first  and
the last vertices coincide and the second and the penultimate vertices are
distinct. Thus the graph  $\Gamma(q)$ contains a cycle.\qedhere
\end{enumerate}
\end{proof}

As we have seen in \Cref{example:bucci-vaslet} for the fixed point $\bu$ of
the morphism
$\eta:a \mapsto aabcacba, b \mapsto aa, c \mapsto a$ for which the defect is known
to be positive, the graph
$\Theta(\varepsilon)$ contains a cycle.
Since the defect of $\bu$ is finite,
\Cref{cor:wednesday} also applies. Thus there are no arbitrarily large
palindromic factors $w$ containing a cycle in their graph $\Theta(w)$
nor non-palindromic factors $w$ containing a cycle in their graph $\Gamma(w)$.
This is readily seen on the conjugacy word of $\eta_L \triangleright \eta_R$
which is $aaa$ (see Fig.~\ref{fig:examplesgraphaaa}).
\begin{figure}[h!]
\begin{center}
\includegraphics{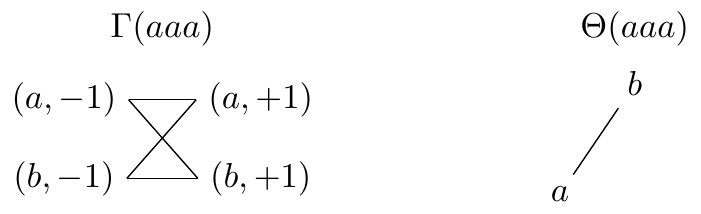}
\end{center}
\caption{$\Gamma(aaa)$ contains a cycle but $\Theta(aaa)$ is a tree
in the language of
the fixed point of the morphism
$a \mapsto aabcacba, b \mapsto aa, c \mapsto a$.}
\label{fig:examplesgraphaaa}
\end{figure}

We are now ready to finish the proof for the multiliteral case.

\begin{proof}[Proof of \Cref{th:main}]
As the languages of the fixed points of $\varphi$ and $\varphi^k$ coincide, we may assume without loss of generality that the marked morphism $\varphi$  has already the property  $\Lst(\varphi_R) = \Fst(\varphi_L)={\rm Id}$.

Proving  that the Zero Defect Conjecture holds in the case of
marked morphisms amounts to prove that the defect is either zero or $+\infty$.
Let us assume on the contrary that $0 < D(\bu) < +\infty$.
It follows that $\bu$ is palindromic.
The primitivity of $\varphi$ implies that $\Lu$ is closed under reversal.

\Cref{th:tuesday} implies that there exists a factor $q$ such that if $q \neq \til{q}$ the graph $\Gamma(q)$ contains a cycle, or if $q=\til{q}$, the graph $\Theta(q)$ contains a cycle.
\Cref{lem:Phi5properties}, property \ref{prop:extensions}, implies that for all $n$, there is a cycle in the graph of $\Phi^n(q)$.

If $q \neq \varepsilon$, then the primitivity of $\varphi$ implies that $\lim_{n \to + \infty} |\Phi^n(q)| = + \infty$.
If $q = \varepsilon$, then, again by \Cref{th:tuesday}, there exists a letter having non-palindromic complete return word.
By the assumption of the theorem, there must exist a conjugate of $\varphi$ distinct from $\varphi$ itself.
It implies that the conjugacy word of $\varphi_L \triangleright \varphi_R$ is nonempty, i.e., $\Phi(\varepsilon) \neq \varepsilon$.
Moreover, $\lim_{n \to + \infty} |\Phi^n(q)| = + \infty$.

To conclude, we have that $\lim_{n \to + \infty} |\Phi^n(q)| = + \infty$ and there is a cycle in the graph of $\Phi^n(q)$ for all $n$.
This is a contradiction with \Cref{cor:wednesday}.
\end{proof}

\section{Comments}

Let us comment two conjectures concerning palindromes in languages of fixed points of primitive morphisms.

\begin{itemize}

\item  The counterexample to the Zero Defect Conjecture  in full generality
    was already mentioned in the Introduction. It is taken  from \cite{BuVa12}.
The fixed point of $$\varphi: a \mapsto aabcacba, b \mapsto aa, c \mapsto a$$ has finite positive palindromic defect and is not periodic.
There is a remarkable property of the fixed point $\bu = \varphi(\bu)$.

Let $\mu: a \mapsto ap, p \mapsto ap aa a ap a aa ap$ be a morphism over the binary alphabet $\{a,p\}$. Let us denote $\bv$ the fixed point of $\mu$.  Then one can easily verify that $\bu = \pi(\bv)$, where   $\pi: a \mapsto a, p \mapsto abcacba$. Moreover,  $\bv$ has zero defect.

In other words, the counterexample word is just an image under $\pi$  of a purely morphic binary word with zero defect.

\item  The counterexample to the question of Hof, Knill and Simon (recalled in~\Cref{sec:binary}) given in
    \cite{La2013}   by the first author is
\[
\psi:a\mapsto aca, b\mapsto cab, c\mapsto b.
\]
As mentioned in \cite{MaPeSta1_submitted}, the fixed point $\bu = \psi(\bu)$  is again an image  of  a Sturmian word  $\bv$ under a morphism  $\pi:  \{0,1\} \mapsto \{a,b,c\}$ and the Sturmian word $\bv$ itself  is a fixed point of  a morphism over binary alphabet $\{0,1\}$. Since $\bv$ is Sturmian, its   defect is zero.
\end{itemize}

Both counterexamples are in some sense degenerate. Both words are on ternary alphabet, but the binary alphabet is hidden in their structure.  For further research in this area, it would be instructive to find another kind of counterexamples to  both mentioned conjectures. In this context  we mention that  the  second and third  authors showed in \cite{PeSta_Milano_IJFCS} that any uniformly recurrent infinite word $\bu$ with a finite defect is a morphic  image of a word $\bv$  with defect $0$.

\end{longversion}

\section*{Acknowledgements}

We are grateful to the anonymous referees for their many valuable comments one
of them leading to a reorganisation of the structure of the article including a
proof of \Cref{th:finite_defect_bs_multiplicities} which generalizes Theorem 3.10
proved for rich words in \cite{BaPeSta2}.

The first author is supported by a postdoctoral Marie Curie fellowship
(BeIPD-COFUND) cofunded by the European Commission.
The second and the third authors acknowledge support of GA\v CR 13-03538S (Czech Republic).

\bibliographystyle{siam}
\IfFileExists{biblio.bib}{\bibliography{biblio}}{\bibliography{../biblio}}

\end{document}